\let\To\Rightarrow
\let\lra\longrightarrow
\def\A{{\mathbb A}}
\def\B{{\mathbb B}}
\def\C{{\mathbb C}}
\def\Z{{\mathbb Z}}
\def\scg{{\frak P}}
\def\A{{\mathbb A}}
\def\B{{\mathbb B}}
\def\C{{\mathbb C}}
\def\G{{\mathbb G}}
\def\Z{{\mathbb Z}}
\let\al\alpha
\let\bb\beta
\let\Dd\Delta
\let\d\partial
\def \ee{\mathop{\epsilon}\nolimits}
\let\x\times
\let\then\Rightarrow
\let\then\Rightarrow
\def\cok{\mathop{\sf Coker}\nolimits}
\def\sh{\mathop{\sf Shukla}\nolimits}
\def \ker{\mathop{\sf Ker}\nolimits}
\def \ext{\mathop{\sf Ext}\nolimits}
\def\xto#1{\xrightarrow[]{#1}}
\newtheorem{Pro}{Proposition}
\newtheorem{Le}[Pro]{Lemma}
\newtheorem{The}[Pro]{Theorem}
\newtheorem{Co}[Pro]{Corollary}
\theoremstyle{definition}
\newtheorem{De}[Pro]{Definition}
\theoremstyle{remark}
\newtheorem{Rem}[Pro]{Remark}
\def\ab{_{\mathrm{ab}}}
\def\al{{\alpha}}
\def\bb{{\beta}}
\def\st{{\frak S}{\frak T}}
\def\pst{{\frak P}{\frak S}{\frak T}}
\def\u{{\mathfrak{U}}}
\def\vu{{\mathfrak{V}}}
 \def\ta{{\frak T}}
\def\ab{{\mathbbm{Ab}}}
\def\psh{{\mathbbm{Psh}}}
\def\sh{{\mathbbm{Sh}}}
\def\xyma{\xymatrix@M.7em}
\let\x\times
\def \p{\mathop{\mathcal P}\nolimits}
\def \q{\mathop{\mathcal Q}\nolimits}
\def\xymat#1{\begin{aligned}\xymatrixcolsep{3pc}\xymatrix{#1}\end{aligned}}
\def \f{\mathop{\mathcal F}\nolimits}
\def \Im{\mathop{\sf Im}\nolimits}
\def\down{\downarrow{}}
\begin{document}
\title{ Cohomology with coefficients in stacks}
\author[M. Jibladze]{Mamuka Jibladze}
\address{Razmadze Mathematical Institute, Tbilisi, Georgia} \email{jib@rmi.ge}
\author[T. Pirashvili]{Teimuraz  Pirashvili}
\address{
Department of Mathematics\\
University of Leicester\\
University Road\\
Leicester\\
LE1 7RH, UK} \email{tp59-at-le.ac.uk}
\thanks{Research was partially supported by the GNSF Grant
ST08/3-387}

\maketitle

\begin{abstract}
Cohomology of a topological space with coefficients in stacks of
abelian 2-groups is invented. This theory  extends the classical
sheaf cohomology. An application is given to twisted sheaves.

\bigskip

\noindent {\bf 2000 Mathematics Subject Classification:} 55N30,
14F05, 54B35, 18D10.

\noindent {\bf Key words and phrases:}  Sheaf cohomology, stacks,
spectra.
\end{abstract}
\section{Introduction}

The aim of this work is to extend classical theory of sheaf
cohomology \cite{tohoku} to stack cohomology. Recall that  sheaves
usually take values in sets, but in order to define the cohomology
$H^n(X,F)$, $n\in \Z$ for a sheaf $F$ one needs to assume that $F$
has values in the category of abelian groups. Of course there is an
important generalization of the theory when $F$ has values in not
necessarily abelian groups, but then $H^n(X,F)$ is defined  only for
$n=0,1$ and perhaps $n=2$. Quite similarly, a stack $\f$ usually
takes values in the 2-category of groupoids, which we think as
2-sets and in order to define the cohomology $H^n(X,\f)$, $n\in \Z$
one needs to assume that $\f$ takes values in the 2-category of
abelian 2-groups. Again if one restrict stacks with values in not
necessarily commutative 2-groups then one can still define
$H^n(X,\f)$ but only for few values of $n$. The nonabelian theory
will be developed elsewhere and here we restricts ourselves to the
case when $\f$ takes values in the 2-category of abelian 2-groups.

We recall that an abelian 2-group (known also as Picard category or
symmetric categorical group) is a categorification of the notion of
abelian group. These objects were invented by Grothendieck and
Deligne in the sixties \cite{SGA}. The basic result on abelian
2-groups was proved in the thesis  of Sinh \cite{sinh} written under
the advice of Grothendieck, and states that the 2-category of
abelian 2-groups is 2-equivalent to the 2-category of 2-stage
spectra, see also the recent account in \cite[Appendix B]{HS}. These
objects play important role in many aspects of homotopy theory,
arithmetic and geometry, see for example  \cite{baues},
\cite{beil2}, \cite{zhu1}.

In this paper  for any topological space $X$ and for any stack of
2-abelian groups we define abelian groups $ H^n(X,\f), n\in \Z$.  In
case when $\f$ is a sheaf considered as a discrete stack the groups
$H^*(X,\f)$ coincide with the classical sheaf cohomology. Our
cohomology shares lots of properties with sheaf cohomology, including
the long exact sequence associated to a suitably defined extension of stacks and behavior on suitably
defined injective objects. However unlike the classical case
these properties do not characterize the groups $ H^*(X,\f)$ in  a
unique way. To avoid this difficulty we also introduce the abelian
2-groups ${\bf H}^n(X,\f)$, $n\in \Z$. The group $H^n(X,\f)$ can
be seen as the group of connected components of ${\bf H}^n(X,\f)$,
$n\in \Z$. The 2-groups ${\bf H}^*(X,-)$ form a suitably defined
long exact sequence and vanish on injective objects. As in the
classical case, we prove that these two facts characterize stack
cohomology ${\bf H}^n(X,-)$ in a unique way. This important result
is based on existence of enough injective objects in the abelian
2-category of abelian 2-groups -- a surprising fact recently
discovered by the second author \cite{2der}. One of our results
claims that if $\A$ is an abelian 2-group considered as a constant
stack then the groups $H^*(X,\A)$ are homotopy invariant. We deduce
this result by proving that there is an isomorphism
$$H^*(X,\A)\cong H^*(X,{\sf sp}(\A))$$ where on the right hand side
$H^*$ denotes the cohomology of $X$ with coefficients in spectra as
it is defined in homotopy theory \cite{adams} and ${\sf sp}(\A)$ is
a spectrum associated to $\A$ according to \cite[Proposition
B.12]{HS}.

One needs to make the reader aware of the fact that our  abelian 2-groups are not assumed to be strictly
commutative as many authors do. Deligne \cite{SGA} also considered
stacks of abelian 2-groups, but he assumes that abelian 2-groups are
strictly commutative and proves that any stack of strictly
commutative 2-groups is equivalent to a chain complex of sheaves of
length one and hence corresponding cohomology is isomorphic to the
obvious hypercohomology groups. If we drop the strict
commutativity then the classical homological algebra technique is
not enough to define cohomology in this generality and we have to
use the machinery of the two dimensional homological algebra,
recently developed in \cite{mamuka_adams}, \cite{2-ch}, \cite{2der}.
In that latter paper the classical theory of the derived functors and
$Ext$'s \cite{CE} was extended to the framework of abelian 2-categories
\cite{2der}, which are certain 2-categories having properties
similar to the 2-category of abelian 2-groups. One of the our main
results claims that the stack cohomology can be described using the
secondary $Ext$ as they defined in \cite{2der}. Of course this is a
2-categorical version of Grothendieck's result in \cite{tohoku}.

The paper is organized as follows. After some preliminaries we
introduce the \v{C}ech type cohomology with coefficients in
prestacks of abelian 2-groups. We also modify  Berishvili's approach
to (pre)sheaf cohomology \cite{beri} for prestacks and we prove that
for paracompact spaces the two approaches give equivalent theories.
In the next section we prove that these cohomologies in fact depend
only on associated stacks. In Section 5 we relate these objects with
Ext in the 2-categorical sense. In the last section we give an
application to twisted sheaves and discriminants.

In this paper we restricted ourselves to topological spaces, but
this restriction is not really necessary and one can develop
cohomology with coefficients in stacks of abelian 2-groups for any
Grothendieck site, based on hypercovers instead of Berishvili
covers. In fact in our forthcoming publication we will introduce
cohomology of 2-toposes, which will generalize not only the theory
developed in this paper but also topos cohomology.

\section{Preliminaries on 2-dimensional algebra}

\subsection{Preliminaries on abelian  2-groups and abelian 2-categories} An abelian  2-group is
a groupoid $\A$ equipped with a symmetric monoidal structure
$+:\A\x\A\to \A$ such that for any object $x$ the endofunctor
$x+(-):\A\to \A$ is an equivalence of categories. The symmetry
constraints are denoted by $c_{x,y}:x+y\to y+x$. An abelian 2-group
is called \emph{strictly commutative} provided $c_{x,x}=id_x$.

It would be convenient to think of abelian  2-groups as
2-dimensional analogues of abelian groups. For any abelian  2-group
$\A$ the set of components $\pi^0(\A)$ (written also $\pi_0(\A)$) of
$\A$ has a natural abelian group structure, while the automorphism
group $\pi^{-1}(\A)$ (written also $\pi_1(\A)$) of the zero object
of $\A$ is commutative. Abelian 2-groups form a groupoid enriched
category $\scg$, where 1-morphisms (called simply morphisms) are
symmetric monoidal functors and 2-morphisms (called tracks) are
monoidal transformations. As in any groupoid enriched category, a
morphism $f:\A\to \B$ is an equivalence if and only if there is a
morphism $g:\B\to \A$ and tracks $1_A\then gf$ and $1_B\then fg$.
One easily sees that a morphism $\A\to \A_1$ of abelian 2-groups is
an equivalence of abelian 2-groups if and only if $\A\to \A_1$
yields an isomorphism of abelian groups $\pi^{i}\A\to \pi^i\A_1$ for
$i=0,-1$.

Any abelian group considered as a discrete category is an abelian
2-group. More generally, for any homomorphism of abelian groups
$f:A^{-1}\to A^0$, we have an abelian 2-group ${\sf K(f)}$. Objects
of the category ${\sf K(f)}$ are just elements of $A^0$. If $a,b\in
A^0$, then a morphism from $a$ to $b$ is an element $x\in A^{-1}$
such that $f(x)=b-a$. The composition and the monoidal structure in
${\sf K(f)}$ are induced from the addition in $A^i$, $i=0,1$. It is
clear that $\pi^0{\sf K(f)})={\sf Coker}(f)$ and $\pi^{-1}{\sf
K(f)})={\sf Ker}(f)$. Observe that ${\sf K(f)}$ is a strictly
commutative 2-group. It is well-known that  any strictly commutative
2-group is  equivalent to an abelian 2-group $K(f)$ for some $f$
\cite{SGA}.

The role of the additive group of integers in the 2-dimensional
algebra is plaid by an abelian 2-group $\Phi$. Objects of the
groupoid $\Phi$ are integers; if $n$ and $m$ are integers then there
are no morphisms between them if $m\not = n$, while the automorphism
group of the object $n$ is the cyclic group of order two
$\{+1,-1\}$. The monoidal structure is induced by the group
structure of integers. The associativity and unitality constraints
are the identity morphisms while the commutativity constraint
$n+m\to m+n$ is $(-1)^{mn}$. As we see from the definition $\Phi$ is
not strictly commutative.

One easily observes that for any abelian 2-groups $\A$ and $\B$ the
hom-groupoid $\scg(\A,\B)$ has a canonical structure of an abelian
2-group \cite{SGA}. The 2-category $\scg$ possesses kernels in the
2-dimensional sense. The following construction goes back to Gabriel
and Zisman \cite{gz}. Let $f:\A\to \B$ be a morphism of 2-groups.
Objects of the groupoid $\ker(f)$ are pairs $(a,\al)$, where $a$ is
an object of $\A$ and $\al:0\to f(a)$ is a morphism in $\B$. A
morphism $(a,\al)\to (b,\bb)$ in $\ker(f)$ is a morphism
$\gamma:a\to b$ in $\A$ such that $f(\gamma) \al=\bb$. The
compositions of morphisms as well as the monoidal structure  in
$\ker(f)$  are induced from $\A$. Observe that we have a canonical
functor $k_f:\ker(f)\to \A$ and a canonical track $\kappa_f:0\then
fk_f$, defined by
$$k_f(a,\al)=a, \ \ \kappa_f(a,\al)=\al.$$
The following important exact sequence was first constructed by
Gabriel and Zisman (see p.84 in \cite{gz})
$$
0\to
\pi^{-1}(\ker(f))\to\pi^{-1}(\A)\to \pi^{-1}(\B)\to
\pi^0(\ker(f))\to\pi^0(\A)\to \pi^0(\B).
$$
It is functorial in $f$ in the following sense. Let
$$\xymat{\B\ar[r]^{g}\drtwocell\omit{^\ee} &\B'
\\
\A\ar[u]^f\ar[r]_{t} &\A'\ar[u]^{f'}}
$$
be a diagram in the 2-category $\scg$. Thus $\ee:gf\then f't$ is a
track. Then the assignment $(a,\al)\mapsto (t(a),\al')$ defines a
morphism of abelian 2-groups $\ker(f)\to\ker(f')$. Here $\al'$ is
the following composite
$$\xymatrix{0\ar@{=>}[r] &g(0) \ar@{=>}[r]^{g(\al)}&gf(a) \ar@{=>}[r]^{\ee_a}& f't(a)}.$$

Based on  the construction of kernels of abelian 2-groups, one can
introduce the notion of the kernel in any 2-category $\ta$ enriched
in $\scg$ \cite{dupont}, \cite{nakaoka}. Let $f:A\to B$ be a
morphism in $\ta$. A diagram
$$
\xymatrix{ K\ar[r]_{k}\rruppertwocell<12>^{0}{^\kappa}
&A\ar[r]_{f}&B}
$$
is a kernel of $f$ if for any object $X\in \ta$ the induced functor
$$\xi:\ta(X,K)\to \ker(f^X)$$
is an equivalence of abelian 2-groups. Here $f^X:\ta(X,A)\to
\ta(X,B)$ is the induced morphism of abelian 2-groups and
$\xi(g:X\to K)=(kg,g^*(\kappa))$.  Of course these notions are
compatible, meaning that for $f:\A\to \B$ in $\scg$, the triple $(\ker(f),k_f,\kappa_f)$ is the
kernel of $f$ in $\scg$ in this sense. To simplify notation, we will say that
$K$ is the kernel of $f$. By duality one introduces cokernels.
According to \cite{vitale} the 2-category $\scg$ possesses also
cokernels and is a prototype of abelian 2-categories \cite{dupont},
\cite{ab-2-ab}.

A morphism $f:A\to B$ in a 2-category $\ta$ is called \emph{faithful} (resp.
\emph{cofaithful}) provided the induced functor $f^X:\ta(X,A)\to
\ta(X,B)$ (resp. $f_X:\ta(B,X)\to \ta(A,X)$) is faithful. For
an abelian 2-category $\ta$, a morphism $f:A\to B$ in $\ta$ is faithful
(resp. cofaithful) iff the induced homomorphism of abelian groups $\pi^{-1}(\ta(X,A))\to \pi^{-1}(\ta(X,B))$
(resp. $\pi^{-1}(\ta(B,X))\to \pi^{-1}(\ta(A,X))$) is a monomorphism.

A morphism $f:\A\to \B$ of abelian 2-groups is faithful (resp. cofaithful) iff the induced morphism
$\pi^{-1}\A\to \pi^{-1}\B$ is a monomorphism (resp. $\pi^{0}\A\to\pi^0\B$ is an epimorphism).

An object $I$ of an abelian 2-category $\ta$ is called
\emph{injective} provided for any faithful morphism $f:A\to B$ and a
morphism $g:A\to I$ there exists a morphism $h:B\to I$ and a track
$hf\then g$. Dually, an object $P$ of an abelian 2-category $\ta$ is
called \emph{projective} provided for any cofaithful morphism
$f:A\to B$ and a morphism $g:P\to B$ there exists a morphism $h:P\to
A$ and a track $fh\then g$. We will say that $\ta$ has \emph{enough
injective} (resp. \emph{enough projective}) objects if for any object
$A$ there exists an injective (resp. projective) object $I$ (resp.
$P$) and a faithful (resp. cofaithful) morphism $A\to I$ (resp.
$P\to A$).

It was proved in \cite{2der} that the 2-category $\scg$ possesses
enough injective and projective objects. For example the abelian
2-group $\Phi$  is the unique (up to equivalence) small,
indecomposable, projective generator of $\scg$ \cite{2der}.

An \emph{extension} of an object $A$ by an object $C$ in an abelian
2-category $\ta$ is  a  triple $(i,p,\al)$ where $i:A\to B$ and
$p:B\to C$ are morphisms in $\ta$ and $\al:0\then pi$ is a track,
such that $p$ is cofaithful and $(A,i,\al)$ is equivalent to the
kernel of $p$. If this is the case, then $i$ is faithful and $C$ is
equivalent to the cokernel of $i$ \cite{2-ch}, \cite{dupont},
\cite{nakaoka}. We sometimes depict such a situation by
$$0\to A\to B\to C\to 0$$
without indicating the track $\al$.

We also need a more general notion, which is called 2-exactness
\cite{2-ch}. For simplicity we consider only the case when
$\ta=\scg$. Assume we have a diagram of abelian 2-groups and tracks
$$
\xymatrix{ \A\ar[r]_f\rruppertwocell<12>^{0}{^\al} &\B\ar[r]_{g}&\C}.
$$
We will say that it is 2-exact at $\B$ provided the induced morphism
$\A\to \ker(g)$ is full and essentially surjective, in other words
it yields an  isomorphism on $\pi^0$ and epimorphism on $\pi^{-1}$.
It follows then that the sequence of abelian groups
$$\pi^i(\A)\to \pi^i(\B)\to \pi^i(\C)$$
is exact at $\pi^i(\B)$ for $i=0,-1$ \cite{2-ch}.

We recall the description of colimits of pseudofunctors following \cite{br}, pp. 192-193. For simplicity we will
only consider pseudofunctors over directed partial ordered sets considered as categories, as this will be the only
case we need. Let $I$ be a directed category and let $C_{()}$ be a pseudofunctor from $I$ to the 2-category of groupoids.
It is thus given by the family of groupoids $(C_i)_{i\in I}$ indexed by objects of $I$, functors $\alpha_*:C_i\to C_j$ for $\alpha:i\to j$ in $I$,
and natural isomorphisms $\Theta_{\beta,\alpha}:\beta_*\alpha_*\then(\beta\alpha)_*$ for $\alpha:i\to j$, $\beta:j\to k$, satisfying appropriate coherence conditions.
The set of objects of the colimit category $C={\sf colim}_{i\in I}C_i$ is just the disjoint
union of the sets of objects of $C_i$, $i\in I$. To describe morphisms we need some notations.
For objects $i_1,i_2$ of the category $I$ we let $i_1\down I\down i_2$ be the category with objects pairs of arrows
$i_1\to j\leftarrow i_2$ and obvious morphisms between these.
For objects $P_1\in C_{i_1}$ and $P_2\in C_{i_2}$, the set of
morphisms from $P_1$ to $P_2$ in $C$ is just the colimit of the functor
$i_1\down I\down i_2\to {\sf Sets}$ which assigns to the object
$(\alpha_1:i_1\to j, \alpha_2:i_2\to j)$ of $i_1\down I\down i_2$ the set
$Hom_{C_j}({\alpha_1}_*(P_1),{\alpha_2}_*(P_2))$, and to a morphism $(\alpha_1,\alpha_2)\to(\gamma\alpha_1,\gamma\alpha_2)$ the map given by
$$
\left({\alpha_1}_*(P_1)\xto f{\alpha_2}_*(P_2)\right)\mapsto
\left({(\gamma\alpha_1)}_*(P_1)\xto{\Theta_{\gamma,\alpha_1}^{-1}}\gamma_*{\alpha_1}_*(P_1)\xto{\gamma_*(f)}\gamma_*{\alpha_2}_*(P_2)\xto{\Theta_{\gamma,\alpha_2}}{(\gamma\alpha_2)}_*(P_2)\right).
$$

Now assume that $C_{()}$ takes values in $\scg$, i.~e. is an object of the 2-category $\scg^I$ of pseudofunctors from $I$ to $\scg$, so that each $C_i$ is an abelian 2-group, and the data ($\alpha_*$, $\Theta_{\beta,\alpha}$) are compatible with the monoidal structures on the $C_i$. For each pair of objects
$(i,j)$ of $I$ we choose an object $\xi(i,j)$ and morphisms
$\alpha_{i,j}:i\to \xi(i,j)$, $\beta_{i,j}:j\to \xi(i,j)$ in $I$. We also
choose an object $i_0$ of $I$. Having such choices made we define the
bifunctor
$$+:C\x C\to C$$
as follows: on objects it is given by $$(P_1,P_2)\mapsto
(\alpha_{i,j})_*(P_1)+(\beta_{i,j})_*(P_2),$$ where $P_1$ and $P_2$ are
objects of $C_i$ an $C_j$ respectively. This assignment has obvious
extension to morphisms. The bifunctor $+$ together with the object
$0_{i_0}$ is part of an abelian 2-group structure on $C$.
For example, the associativity constraints are obtained by choosing,
using directedness of $I$, objects $\xi(i,j,k)$ and morphisms $\alpha_{i,j,k}:\xi(\xi(i,j),k)\to\xi(i,j,k)$, $\beta_{i,j,k}:\xi(i,\xi(j,k))\to\xi(i,j,k)$ for each triple $(i,j,k)$ of objects of $I$. This then yields, for objects $P_1$, $P_2$, $P_3$ of, respectively, $C_i$, $C_j$ and $C_k$, isomorphisms $(P_1+P_2)+P_3\to P_1+(P_2+P_3)$ as elements of the colimit over $\xi(\xi(i,j),k)\down I\down\xi(i,\xi(j,k))$, using the isomorphisms $\Theta$ and the associativity constraints in $C_{\xi(i,j,k)}$.

Observe that different choices of $\xi$, $\alpha$, $\beta$ give rise to an equivalent abelian
2-group. In this way one obtains a pseudofunctor
$${\sf colim}:\scg^I\to\scg.$$ One observes that
$$\pi^n({\sf colim}_{i\in I} C_i)={\sf colim}_{i\in I}\pi^n(C_i), \ n=0,-1.$$
It follows that ${\sf colim}:\scg^I\to \scg$ is exact and for any
cofinal subcategory $J$ of $I$ the obvious morphism ${\sf colim}_{j\in
J}C_j\to {\sf colim}_{i\in I}C_i$ is an equivalence of abelian
2-groups.

\subsection{2-chain complexes} What is important for us is that there is
a way of doing  homological algebra in $\scg$, or more generally in
any abelian 2-category $\ta$.  More precisely, a \emph{2-cochain
complex} $(\A_*,d,\partial)$ in $\scg$ is a diagram of the form
$$
\xymatrix{ \cdots\ar[r]\rrlowertwocell<-12>_{0}{}
&\A^{n-1}\ar[r]|-{d^{n-1}}\rruppertwocell<12>^{0}{^\partial^n}
&\A^{n}\ar[r]|-{d^n}\rrlowertwocell<-12>_{0}{_{}{\hskip1.2em\d^{n+1}}}
&\A^{n+1}\ar[r]|-{d^{n+1}}\rruppertwocell<12>^{0}{^{}}
&A^{n+2}\ar[r] &\cdots }
$$
i.~e., a sequence of abelian  2-groups $\A^n$, maps $d^n:\A^{n}\to
\A^{n+1}$ and tracks $\partial^n:d^{n+1}d^{n}\then0$, $n\in\Z$, such
that for each $n$ the tracks
$$
\xymatrix{d^{n+1}d^nd^{n-1}\ar@{=>}[r]^{\ \ \ \ \ d^{n+1}\d^n}&d^{n+1}0\ar@{=>}[r]^{\ \ \ \equiv}&0}
$$
and
$$
\xymatrix{d^{n+1}d^nd^{n-1}\ar@{=>}[r]^{\ \ \ \ \ \partial^{n+1}d^{n-1}}&0d^{n-1}\ar@{=>}[r]^{\ \ \ \equiv}&0}
$$
coincide.

For any 2-cochain complex $(\A^*,d,\partial)$ and any integer $n$,
there is a well-defined abelian  2-group called $n$-th cohomology
${\bf H}^n(\A^*)$ of $\A^*$ (see \cite{2-ch}). We recall here the
definition of these abelian 2-groups. Assume we have morphisms of
abelian 2-groups $\xymatrix{\A\ar[r]^{f}&\B\ar[r]^{g} &\C}$ and a
track $\al:0\then gf$. Then we have the diagram
$$\xymat{\B\ar[r]^{g}\drtwocell\omit{^\al} &\C
\\
\A\ar[u]^f\ar[r] &0\ar[u]}
$$
which yields a morphism of abelian 2-groups $\al':\ker(f)\to \Omega
\C$, where $\Omega \C=\ker(0\to \C)$. We let $\ker(f,\al)$ be the
kernel of $\al'$ and call it the \emph{relative kernel}. This
construction makes sense in any abelian 2-category as well. In
particular one can talk about relative cokernels.  For a 2-cochain
complex $(\A^*,d,\partial)$ we first take the relative kernel
$\ker(d^n,\partial^{n+1})$. It comes with a natural morphism
$d':\A^{n-1}\to \ker(d^n,\partial^{n+1})$ and a track
$\partial':0\to d'\circ d^{n-2}$ and ${\bf H}^n(\A^*)$ is defined to
be $\cok(d',\partial')$ \cite{2-ch}. Following
\cite{mamuka_adams} we call ${\bf H}^*(\A^*)$ the \emph{secondary
cohomology} of $\A^*$.

We also put $$H^n_U(\A^*):=\pi^0({\bf H}^n(\A^*))$$ These groups are
known as \emph{Takeuchi-Ulbrich cohomology} \cite{2-ch} and first
were defined in \cite{ulb},\cite{tak_ulb}. We have an isomorphism
$$\pi^{-1}({\bf H}^{n+1}(\A^*))\cong H^n_U(\A^*)$$
and an exact sequence (called TU-exact sequence) of abelian groups
$$\cdots \to H^{n+1}(\pi^{-1}(\A^*))\to H^n_U(\A^*)\to H^n(\pi^0(\A^*))\to
H^{n+2}(\pi^{-1}(\A^*))\to \cdots $$
\begin{Le}\label{TUzero} If $H_U^*(\A^*)=0$, then ${\bf H}^*(\A^*)$ is equivalent
to the zero object. More generally, if $f:\A^*\to \B^*$ is a morphism of
2-chain complexes such that the induced morphism $H_U^*(\A^*)\to
H_U^*(\B^*)$ is an isomorphism of abelian groups then the morphism of
abelian 2-groups ${\bf H}^*(\A^*) \to {\bf H}^*(\B^*)$ is an
equivalence.
\end{Le}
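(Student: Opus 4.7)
The plan is to reduce the lemma to the standard criterion that a morphism of abelian 2-groups is an equivalence if and only if it induces isomorphisms on both $\pi^0$ and $\pi^{-1}$, which was recalled in the preliminaries. The two displayed identifications
$$H^n_U(\A^*)=\pi^0({\bf H}^n(\A^*)),\qquad \pi^{-1}({\bf H}^{n+1}(\A^*))\cong H^n_U(\A^*)$$
do essentially all the work, so the task is just to feed the hypotheses through them, together with the naturality of the second isomorphism in the 2-chain complex.

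For the first assertion, I would observe that the vanishing of $H_U^*(\A^*)$ immediately gives $\pi^0({\bf H}^n(\A^*))=H_U^n(\A^*)=0$ on the nose, while $\pi^{-1}({\bf H}^n(\A^*))\cong H_U^{n-1}(\A^*)=0$ by the second identification. Hence both homotopy invariants of the abelian 2-group ${\bf H}^n(\A^*)$ vanish for every $n\in\Z$, so ${\bf H}^n(\A^*)$ is equivalent to the zero object, as already noted in the preliminaries. (Alternatively, this case can be obtained from the second assertion by taking $\B^*$ to be the zero complex.)

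For the more general assertion, I would apply the 2-functor ${\bf H}^n$ to $f:\A^*\to\B^*$ and analyze the induced morphism ${\bf H}^n(f):{\bf H}^n(\A^*)\to{\bf H}^n(\B^*)$ on homotopy invariants. By the first identification (and its naturality), the map induced on $\pi^0$ is precisely $H_U^n(\A^*)\to H_U^n(\B^*)$, which is an isomorphism by assumption. By the second identification, which is natural in the 2-chain complex, the map induced on $\pi^{-1}$ is (up to this natural isomorphism) $H_U^{n-1}(\A^*)\to H_U^{n-1}(\B^*)$, again an isomorphism by hypothesis. Therefore ${\bf H}^n(f)$ induces isomorphisms on $\pi^{0}$ and $\pi^{-1}$, and is consequently an equivalence of abelian 2-groups.

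The only non-formal point is the naturality of the isomorphism $\pi^{-1}({\bf H}^{n+1}(\A^*))\cong H_U^{n}(\A^*)$ in $\A^*$; this is the part where I would be most careful, but it is built into the construction of ${\bf H}^*$ via relative kernels and cokernels as recalled from \cite{2-ch}, so the naturality comes for free from the construction and does not require any additional argument.
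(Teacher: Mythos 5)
Your proof is correct and follows essentially the same route as the paper: both reduce the statement to the criterion that a morphism of abelian 2-groups is an equivalence iff it induces isomorphisms on $\pi^0$ and $\pi^{-1}$, and then invoke the identifications $\pi^0({\bf H}^n(\A^*))=H^n_U(\A^*)$ and $\pi^{-1}({\bf H}^{n+1}(\A^*))\cong H^n_U(\A^*)$ (with their naturality). The only difference is that you make the naturality point explicit, which the paper leaves implicit.
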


\begin{proof} A morphism of  abelian 2-groups $\C\to \C'$ is an equivalence  if and
only if the induced morphism $\pi^i(\C)\to \pi^{i}(\C')$ is an
isomorphism for $i=0,-1$. Thus the result follows from the
isomorphism $\pi^{-1}({\bf H}^{n+1}(\A^*))\cong H^n_U(\A^*)$.
\end{proof}
Of course there is also a notion of a morphism of 2-cochain
complexes as well as a homotopy between two parallel morphisms of
2-cochain complexes, with expected properties. One has also an
analogue of the long cohomological exact sequence in the 2-dimensional
world \cite{2-ch}. In fact the following is one of the main results
of \cite{2-ch}. Assume we have an extension of 2-cochain complexes
$$\xymatrix{0\ar[r] &\A^*\ar[r]^{i^*}& \B^*\ar[r]^{p^*}& \C^*\ar[r]& 0}$$
Here we assume that for each $n$ there are given tracks
$\al^n:0\then p^ni^n$ such that  $\al^n$ and $\al^{n+1}$ are
compatible in the obvious sense. Then there are morphisms ${\bf
H}^n(\C^*)\to {\bf H}^{n+1}(\A^*)$, $n\in \Z$, and appropriate tracks
such that the diagram
$$\cdots \to {\bf H}^n(\A^*) \to {\bf H}^n(\B^*)\to {\bf H}^n(\C^*)\to {\bf
H}^{n+1}(\A^*)\to \cdots$$ is part of a 2-exact sequence.

\subsection{Derived 2-functors}\label{Sec_ext} Let $\ta$ be an abelian 2-category
with enough injective objects and $A$ be an object in $\ta$.
Following \cite{2der}, an \emph{injective resolution} of $A$ is a
morphism $ A\to A^*$ of 2-cochain complexes, which induces
isomorphism on secondary cohomology, where $A$ is considered as a
2-chain complex concentrated in dimension 0 with trivial
differentials $d=0,
\partial=0$ and $(A^*,d,\partial)$ is a 2-cochain complex with injective $A^n$, $n\geq 0$
and  $A^n=0$, $n<0$. Moreover $\partial^n$ is equal to the identity
track for $n<0$. As in the classical case, any object admits an
injective resolution, which is unique up to homotopy. For any
additive pseudofunctor $T:\ta \to \scg$ one obtains  well-defined
additive pseudofunctors  ${\sf R}^n(T):\ta\to \scg$, $n\in \Z$
(called the \emph{secondary right derived 2-functors}) by
$${\bf R}^n(T) (A):={\bf H}^n(T(I^*))$$
where $I^*$ is an injective resolution of $A$. If one takes the
Takeuchi-Ulbrich homology instead, one gets the Takeuchi-Ulbrich right
derived functors, which are denote by $R^nT$, $n\in \Z$. Then for
any extension
$$
0\to  A\xto{i} B\xto{p} C\to  0, \ \ \al:0\then pi
$$
the sequence
$$\cdots \to {\bf R}^{n-1}T(C)\to {\bf R}^{n}T(A)\to {\bf R}^{n}T(B)\to
{\bf R}^{n}T(C)\to \cdots$$ is part of a 2-exact sequence of
abelian 2-groups. Furthermore we have the following exact sequence
of abelian groups
$$\cdots \to {R}^{n-1}T(C)\to {R}^{n}T(A)\to {R}^{n}T(B)\to
R^nT(C)\to \cdots$$ Moreover ${\bf R}^nT=0$ if $n<-1$ and $R^nT=0$
if $n<0$.

\begin{Pro} \label{ax}\cite{2der} Assume  ${\bf T}^n:\ta\to \scg$, $n\in \Z$  is a system of
additive pseudofunctors such that ${\bf T}^n=0$, if $n<-1$. Assume
the following conditions hold

i) for any extension $$
0\to  A\xto{i} B\xto{p} C\to  0, \ \ \al:0\then pi
$$
the sequence
$$\cdots \to {\bf T}^{n}(A)\to {\bf T}^{n}(B)\to {\bf T}^{n}(C)\to
{\bf T}^{n+1}(A)\to \cdots$$ is part of a 2-exact sequence of
abelian 2-groups,

ii) for any injective $I$ one has ${\bf T}^n(I)=0$ for $n>1$ and
$\pi^{0} {\bf T}^1(I)=0$.

Then there exists a natural equivalence of 2-functors
$${\bf R}^n{\bf T}^0\cong {\bf T}^n, \ n\in \Z.$$
\end{Pro}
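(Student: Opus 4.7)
The plan is to adapt Grothendieck's classical dimension-shifting proof of uniqueness of derived functors to the 2-categorical setting, with particular care around the asymmetric form of hypothesis (ii).

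Fix $A\in\ta$ and an injective resolution $A\to I^*$, and extract the short exact sequences
\[
0\to A\to I^0\to A^1\to 0,\qquad 0\to A^k\to I^k\to A^{k+1}\to 0\quad(k\geq 1),
\]
where $A^k$ denotes the $k$-th syzygy. Both $\mathbf{T}^*$ (by hypothesis (i)) and $\mathbf{R}^*\mathbf{T}^0$ (by Section \ref{Sec_ext}) admit long 2-exact sequences along each such extension, and $\mathbf{R}^n\mathbf{T}^0(I)=0$ for $n\geq 1$, $I$ injective, directly from the definition. For $n\geq 2$ the vanishing $\mathbf{T}^{n-1}(I^k)=\mathbf{T}^n(I^k)=0$ from hypothesis (ii) yields equivalences $\mathbf{T}^n(A^k)\simeq\mathbf{T}^{n-1}(A^{k+1})$, while the identical argument for $\mathbf{R}^*\mathbf{T}^0$ produces compatible equivalences $\mathbf{R}^n\mathbf{T}^0(A^k)\simeq\mathbf{R}^{n-1}\mathbf{T}^0(A^{k+1})$. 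Iterating reduces the claim $\mathbf{R}^n\mathbf{T}^0\simeq\mathbf{T}^n$ for $n\geq 2$ to the cases $n\leq 1$.

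For the base cases, the comparison $\mathbf{T}^0(A)\to\mathbf{R}^0\mathbf{T}^0(A)=\mathbf{H}^0(\mathbf{T}^0(I^*))$ is supplied by the universal property of the relative kernel, since the composite $\mathbf{T}^0(A)\to\mathbf{T}^0(I^0)\to\mathbf{T}^0(I^1)$ carries a canonical track to zero coming from the resolution. The 2-exact sequences for $0\to A\to I^0\to A^1\to 0$ and $0\to A^1\to I^1\to A^2\to 0$ then identify $\mathbf{T}^0(A)$ with the relative kernel of $\mathbf{T}^0(I^0)\to\mathbf{T}^0(I^1)$, giving the equivalence at $n=0$. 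For $n=1$, the connecting morphism from the first extension, composed with the factorisation through $\mathbf{R}^0\mathbf{T}^0(A^1)$, supplies the comparison $\mathbf{R}^1\mathbf{T}^0(A)\to\mathbf{T}^1(A)$; the case $n<-1$ is immediate, and $n=-1$ follows by chasing one step further using the isomorphism $\pi^{-1}\mathbf{H}^0\cong H^{-1}_U$ and the TU-exact sequence.

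The main obstacle lies at $n=1$, where the asymmetric hypothesis $\pi^0\mathbf{T}^1(I)=0$ (with $\pi^{-1}\mathbf{T}^1(I)$ possibly nonzero) blocks a direct dimension shift. One must unwind the full 2-exact sequence
\[
\mathbf{T}^0(I^0)\to\mathbf{T}^0(A^1)\to\mathbf{T}^1(A)\to\mathbf{T}^1(I^0)\to\mathbf{T}^1(A^1)\to\mathbf{T}^2(A)
\]
and check that the comparison induces isomorphisms on $\pi^0$ and $\pi^{-1}$ separately. The vanishing $\pi^0\mathbf{T}^1(I^0)=0$ is exactly what is needed: it forces $\pi^0\mathbf{T}^1(A)$ to be the cokernel of $\pi^0\mathbf{T}^0(I^0)\to\pi^0\mathbf{T}^0(A^1)$ (matching $R^1\mathbf{T}^0(A)=H^1_U(\mathbf{T}^0(I^*))$), while the $\pi^{-1}\mathbf{T}^1(I^0)$ that survives to the right of $\mathbf{T}^1(A)$ correctly pins down $\pi^{-1}\mathbf{T}^1(A)$ via the TU-exact sequence, matching $\pi^{-1}\mathbf{R}^1\mathbf{T}^0(A)\cong H^0_U(\mathbf{T}^0(I^*))\cong\pi^0\mathbf{T}^0(A)$. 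Once the cases $n=0,1$ are settled, induction on $n$ via the compatible dimension shifts for $\mathbf{T}^*$ and $\mathbf{R}^*\mathbf{T}^0$ closes the argument, with naturality in $A$ inherited from naturality of the 2-exact sequences.
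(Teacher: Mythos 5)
The first thing to note is that the paper contains no proof of this proposition: it is quoted from \cite{2der}, so your argument can only be judged on its own terms, and its overall skeleton (the classical axiomatic uniqueness of derived functors via dimension shifting along $0\to A\to I^0\to A^1\to 0$ and the syzygy extensions) is indeed the right one. But two steps fail as written. First, your shift equivalences for $n\ge 2$ invoke the vanishing ${\bf T}^{n-1}(I^k)={\bf T}^{n}(I^k)=0$, which hypothesis (ii) supplies only when $n-1>1$; at $n=2$ you only have $\pi^{0}{\bf T}^{1}(I^k)=0$, and ${\bf T}^{1}(I^k)$ is in general a nontrivial connected 2-group (as it must be, since ${\bf R}^{1}{\bf T}^{0}(I)$ is connected with $\pi^{-1}\cong\pi^{0}{\bf T}^{0}(I)$). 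The segment ${\bf T}^{1}(I^k)\to{\bf T}^{1}(A^{k+1})\to{\bf T}^{2}(A^k)\to 0$ therefore gives an isomorphism on $\pi^{0}$ but on $\pi^{-1}$ only exhibits $\pi^{-1}{\bf T}^{2}(A^k)$ as a quotient by the unknown image of $\pi^{-1}{\bf T}^{1}(I^k)$; so your reduction of all $n\ge 2$ to $n\le 1$ breaks at its last step, and the case $n=2$ requires the same two-level analysis as $n=1$, in particular knowledge that the comparison at $n=1$ is an equivalence on injectives, i.e.\ that $\pi^{-1}{\bf T}^{1}(I)\cong\pi^{0}{\bf T}^{0}(I)$ --- which you never establish.

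Second, and this is the heart of the matter, your identification of $\pi^{-1}{\bf T}^{1}(A)$ is an assertion, not a proof. The only tools you use are the two levelwise exact sequences on $\pi^{0}$ and $\pi^{-1}$ extracted from 2-exactness, but the $\pi^{-1}$-sequence around ${\bf T}^{1}(A)$ involves $\pi^{-1}{\bf T}^{1}(I^0)$, about which hypothesis (ii) says nothing; these sequences alone cannot ``pin down'' $\pi^{-1}{\bf T}^{1}(A)$, let alone identify it naturally with $H^{0}_U({\bf T}^{0}(I^*))\cong\pi^{-1}{\bf R}^{1}{\bf T}^{0}(A)$. One has to exploit the finer content of 2-exactness --- the full and essentially surjective comparison with the relative kernel, which interleaves $\pi^{-1}$- and $\pi^{0}$-information --- and, before any five-lemma-type comparison can even be stated, one must actually construct a pseudonatural transformation ${\bf T}^{n}\to{\bf R}^{n}{\bf T}^{0}$ (with its tracks), show it is independent of the chosen injective resolution, and show it is compatible with the connecting morphisms of both 2-exact sequences; ``naturality inherited from naturality of the 2-exact sequences'' does not supply this, and in the 2-categorical setting these coherence verifications are the bulk of the work. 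The same goes for the edge case $n=-1$, which you dismiss in one clause although it is governed by exactly the $\pi^{-1}$/$\pi^{0}$ interleaving you have not used. So the proposal captures the expected strategy but leaves genuine gaps precisely where the 2-dimensional phenomena (connected values on injectives, tracks, relative kernels) differ from the classical argument.
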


In particular one can take the 2-functor  ${\bf Hom}(-,B)$ and get
the secondary derived 2-functors ${\bf Ext}_\ta^n(-,B)$ as well as
the Takeuchi-Ulbrich derived functors ${Ext}_\ta^n(-,B)$. For $n=1$
these objects are related to extensions; for a precise result we
refer the reader to \cite{2der}.

\section{Topological significance}

In classical algebraic topology chain complexes usually
arise from (pre)cosimplicial abelian groups, by taking the boundary
operator to be the alternating sum of coface operators. According to
\cite{tak_ulb} a similar construction works in dimension 2 as well.
More precisely, a \emph{precosimplicial object} in the 2-category
$\scg$ is a sequence of abelian 2-groups $\A^n$, $n\geq 0$,
morphisms of abelian 2-groups $d_i:\A^n\to \A^{n+1}$, $0\leq i\leq
n$ and tracks
$$\alpha_{i,j}:d_i\circ d_j\To d_{j+1}\circ d_i, \ \ i\leq j$$
such that for any $i\leq j\leq k$ the following diagram commutes
$$\xymatrix{&d_{j+1}d_id_k\ar@{=>}[dr]^{d_{j+1}(\al)}\\
d_id_jd_k\ar@{=>}[ur]^\al \ar@{=>}[d]_{d_i(\al)}&&d_{j+1}d_{k+1}d_i\ar@{=>}[d]_\al\\
d_id_{k+1}d_j\ar@{=>}[dr]_{\al}&&d_{k+2}d_{j+1}d_i\\
&d_{k+2}d_id_j\ar@{=>}[ur]_{d_{k+2}(\al)}}$$
According to Section 3 of \cite{tak_ulb} one can associate a 2-cochain
complex $C^*(\A^*)$ to a precosimplicial object $\A^*$ of $\scg$ .
More precisely, we have $C^n(\A^*)=\A^n$ and
$d=\sum_{i=0}^{n+1}(-)^i\d^i$ with appropriate $\delta:d^2\then 0$.
By abuse of notation we use the notations ${\bf H}^n(\A^*)$ and
$H^n_U(\A^*)$ instead of ${\bf H}^n(C^*(\A^*))$ and
$H^n_U(C^*(\A^*))$.

It is clear that any pseudofunctor from the category $\Dd$ to the
2-category $\scg$ gives rise to a precosimplicial object.

Recall that \cite{adams} a spectrum $E$ is  a
sequence of topological spaces, or better simplicial sets,  $E_n$
and continuous maps $\Sigma E_n\to E_{n+1}$, $n\in \Z$. A spectrum
$E$ is an $\Omega$-spectrum provided the induced map $E_{n}\to
\Omega E_{n+1}$ is a weak equivalence. Any spectrum $E$ gives rise
to the (generalized) cohomology theory on topological spaces by
$X\mapsto H^n(X,E)$, where $X$ is a topological space. In case when
$E$ is an $\Omega$-spectrum one has
$$H^n(X,E)=[X,E_n]$$

Let $k>0$ be an integer. A spectrum $E$ is called $k$-stage  if
$\pi_i(E)=0$ for $i<0$ and $i\geq k$. It is well known that if $E$
is a 1-stage spectrum corresponding to the abelian group $A$ (that
is $\pi_0(E)=A$) then $H^*(X,E)$ coincides with the classical
singular cohomology $H^*(X,A)$, which by definition is the
cohomology of the cochain complex associated to the cosimplicial
abelian group $A^{Sing_*(X)}$, where $Sing_*(X)$ is the singular
simplicial set of $X$. As we have seen for general $E$  even the
definition of $H^*(X,E)$ uses heavy machinery of homotopy theory.
Unlike to the singular cohomology of $X$ with coefficients in an
abelian group $A$ it is impossible  to get these groups from the
classical homological algebra means (see \cite{ch_th}).

The 2-dimensional algebra gives a  similar result for 2-stage
spectra. To state the corresponding result let us recall that by the
result of \cite{sinh}  the 2-category $\scg$ is 2-equivalent to the
2-category of two-stage spectra (see also Proposition B.12 in
\cite{HS}). If $\A$ is an abelian 2-group, we let ${\sf sp}(\A)$ be
the corresponding spectrum. Below is a hint how to construct  ${\sf
sp}(\A)$.

It is well known that any abelian 2-group is equivalent to one for
which the associativity and unitality constraints are identities.
Let us call such abelian 2-groups strictly associative. Let $n\geq
2$ be an integer. Then the category of strictly associative abelian
2-groups and strict morphisms is equivalent to the full subcategory
of the category of simplicial groups consisting of simplicial groups
$G_*$ whose Moore normalization is nontrivial only in dimensions $n$
and $n+1$ \cite{conduche}. For a strict abelian group $\A$ we let
$T(\A,n)$ be the corresponding simplicial group. A more direct
construction of $T(\A,n)$ can be found in \cite{bcc}. So if one
takes ${\sf sp}(\A)_n$ to be the classifying space of $T(\A,n)$,
$n\geq 2$,  one
 obtains the desired $\Omega$-spectrum. Hence we have
$$\pi_n({\sf sp}(\A))=\begin{cases}\pi^{-n}(\A) & n=0,1\\ 0, &n\not
=0,1.\end{cases}$$ Moreover, by dimension reasons  ${\sf sp}(\A)$
has only one nontrivial Postnikov invariant which is the
homomorphism $\pi^0(\A)/2\pi^0(\A)\to \pi^1(\A)$ induced by the
symmetry constraint $$a\mapsto c_{a,a}\in Aut(a+a)\cong
Aut(0)=\pi_1(\A)$$ where the canonical isomorphism $Aut(0)\to
Aut(b)$ is induced by the functor $b+(-):\A\to \A$.

 The most important spectrum is the sphere
spectrum $S$, and the abelian group $\Z$ can be seen as the zeroth
Postnikov truncation of $S$. If we take the next stage we obtain the
spectrum $S_{\leq 1}$ with properties
$$\pi_0(S_{\leq1})=\Z, \ \ \pi_1(S_{\leq 1})=\Z/2\Z, \ \ \pi_i(S_{\leq 1})=0, \
i\ne0,1$$ From the above description of the Postnikov invariant it
is clear that ${\sf sp}(\Phi)$ and $S_{\leq 1}$ are homotopy
equivalent spectra. Observe also that if $\A$ is a strictly
commutative abelian 2-group, then the Postnikov invariant of ${\sf
sp}(\A)$ is zero. Hence it splits as a product of two spectra ${\sf
sp}(\pi^0(\A))$ and ${\sf sp}(\pi^{-1}(\A))[1]$ and hence we have
$$H^*(X,{\sf
sp}(\A)\cong H^*(X, \pi^0(A))\oplus H^{*+1}(X, \pi^{-1}(A))$$ For
general $\A$ we have the following result.

\begin{The} Let $X$ be a topological space and $\A$ be an abelian 2-group. Then  one has the
natural isomorphisms of abelian groups:
$$H^*(X,{\sf sp}(\A))\cong H^*_U(\A^{Sing_*(X)})$$
\end{The}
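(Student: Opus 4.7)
The plan is to show that both $H^n(X,{\sf sp}(\A))$ and $H^n_U(\A^{Sing_*(X)})$ fit into the same six-term long exact sequence expressing them through the classical singular cohomology of $X$ with coefficients in $\pi^0(\A)$ and $\pi^{-1}(\A)$, and then to identify them via the five lemma after constructing a natural comparison map. On both sides the connecting homomorphism is controlled by the symmetry constraint of $\A$.

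First I unwind the right-hand side using the TU exact sequence of Section 2.2. Exponentiation by a set preserves $\pi^0$ and $\pi^{-1}$, so $\pi^i(\A^{Sing_n(X)})=\pi^i(\A)^{Sing_n(X)}$ for $i=0,-1$, and the resulting cosimplicial abelian groups compute classical singular cohomology of $X$ with coefficients in $\pi^i(\A)$. The TU exact sequence then specialises to
\[\cdots\to H^{n+1}(X,\pi^{-1}(\A))\to H^n_U(\A^{Sing_*(X)})\to H^n(X,\pi^0(\A))\xto{\kappa^n} H^{n+2}(X,\pi^{-1}(\A))\to\cdots,\]
with connecting map $\kappa^n$ built from the coherent symmetry $c_{-,-}$ of $\A$.

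Next I unwind the left-hand side via the Postnikov tower of the 2-stage $\Omega$-spectrum ${\sf sp}(\A)$. There is a single cofibre sequence $H(\pi^{-1}(\A))[1]\to{\sf sp}(\A)\to H(\pi^0(\A))$, whose $k$-invariant has been explicitly identified in the excerpt as the homomorphism $\pi^0(\A)/2\pi^0(\A)\to\pi^{-1}(\A)$ sending $a$ to $c_{a,a}$. Applying $H^*(X,-)$ yields a parallel long exact sequence
\[\cdots\to H^{n+1}(X,\pi^{-1}(\A))\to H^n(X,{\sf sp}(\A))\to H^n(X,\pi^0(\A))\xto{k^*} H^{n+2}(X,\pi^{-1}(\A))\to\cdots.\]

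Third, I construct a natural transformation $H^n_U(\A^K)\to[K,{\sf sp}(\A)_n]$ for any simplicial set $K$, and specialise to $K=Sing_*(X)$. The input is that ${\sf sp}(\A)_n$ may be taken to be the classifying space $BT(\A,n)$ of the Conduché simplicial group of $\A$; a normalised TU $n$-cocycle in $\A^K$ unpacks as a simplicial map $K\to BT(\A,n)$ (the 2-cocycle conditions in $\scg$ matching the simplicial identities at Moore level), and a TU coboundary becomes a simplicial homotopy. Naturality in $K$ and in $\A$ is routine. It remains to check that this natural map intertwines the two long exact sequences, reduces to the identity on the classical cohomology entries, and that $\kappa^n=k^*$; the five lemma then delivers the isomorphism. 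The main obstacle is the last point: verifying at cocycle level that the TU-connecting map $\kappa^n$ coincides with the classical Postnikov invariant $k^*$. Carrying this out requires opening up the Conduché model $T(\A,n)$, extracting $k^*$ from its Moore complex, and matching it with the explicit secondary differential of the TU cochain complex arising from the symmetry of $+:\A\x\A\to\A$.
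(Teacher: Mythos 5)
Your overall strategy is viable, but as written it has a genuine gap: the two steps you defer are not routine bookkeeping, they \emph{are} the theorem. The construction of a well-defined map $H^n_U(\A^K)\to[K,{\sf sp}(\A)_n]$ from normalised TU cocycles (and the fact that coboundaries go to homotopies, that the map is additive, natural, and compatible with both long exact sequences), together with the identification of the TU connecting homomorphism $\kappa^n$ with the map induced by the Postnikov invariant of ${\sf sp}(\A)$, is exactly the content of the Eilenberg--Mac Lane type classification theorem of Bullejos, Carrasco and Cegarra. The paper's proof is one line: reduce to the case where $\A$ is strictly associative and invoke Theorem 4.3 of \cite{bcc}, which already asserts the isomorphism between the cohomology of a simplicial set with coefficients in a symmetric categorical group and the cohomology with coefficients in the associated spectrum. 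So your plan amounts to re-proving the cited result, while leaving its hardest part (``opening up the Conduch\'e model $T(\A,n)$ and matching $\kappa^n$ with $k^*$'') unexecuted; until that verification is actually carried out, the five-lemma argument has nothing to feed on, since commutativity of the square involving the connecting maps is precisely the statement $\kappa^n=k^*$.

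Two smaller points. First, the Conduch\'e model $T(\A,n)$ is only available for strictly associative $\A$, so your argument needs the explicit reduction ``without loss of generality $\A$ is strictly associative'' (which the paper makes) before ${\sf sp}(\A)_n=BT(\A,n)$ can be used; you use this model without that step. Second, if you do succeed in establishing the cocycle-level correspondence $H^n_U(\A^K)\cong[K,BT(\A,n)]$ in full, the detour through the two long exact sequences and the five lemma becomes unnecessary --- that correspondence already gives the theorem directly, which is how \cite{bcc} proceed.
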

\begin{proof}
Without loss of generality one can assume that $\A$ is strictly
associative. Then the result follows from Theorem 4.3  of
\cite{bcc}.
\end{proof}
\section{Preliminaries on presheaves and sheaves of abelian groups}
\subsection{Main definitions} Let $X$ be  a topological space. We let $OP(X)$ be the
category corresponding to the partially ordered set of open subsets
of $X$. A \emph{presheaf} $P$ on $X$ is simply a contravariant
functor from $OP(X)$ to the category of abelian groups $\ab$. The
category of presheaves on $X$ is denoted by $\psh(X)$. The category
$\psh(X)$ is an abelian category with enough projective and
injective objects \cite{tohoku}. A sequence
$$0\to P_1\to P\to P_2\to 0$$
is exact in $\psh(X)$ if and only if for any open subset $U$ of $X$
the sequence
$$0\to P_1(U)\to P(U)\to P_2(U)\to 0$$
is an exact sequence of abelian groups.

 Let $U$ be an open subset of $X$ and let
$\u= \{U_i\}_{i\in I}$ be an open cover of $U$. Recall that the
nerve $N\u$ of this cover  is the simplicial space given by
$$[n]\mapsto \bigsqcup_{i_0,\cdots,i_n\in I} U_{i_0\cdots i_n}$$
with the obvious face and
degeneracy maps induced by the inclusion of open sets.  Here $
U_{i_0\cdots i_n}=\bigcap_{j=0}^nU_{i_j}$. If $P$ is a presheaf on $X$
we can apply $P$ on $N\u$ to obtain a cosimplicial abelian group
$[n]\mapsto \prod_{i_0, \cdots,i_n\in I} P(U_{i_0\cdots i_n})$. The
cohomology of the associated cochain complex is denoted by $H^*(\u,
P)$. The obvious augmentation $N\u\to U$ yields the homomorphisms
$$P(U)\to H^0(\u,P ).$$
A presheaf $P$ is called a \emph{sheaf} provided for any open set
$U$ and any open cover $\u= \{U_i\}_{i\in I}$ of $U$ the canonical
homomorphism of abelian groups $P(U)\to H^0(\u, P)$ is an
isomorphism \cite{tohoku}. We let $\sh(X)$ be the full subcategory
of $\psh(X)$ consisting of sheaves on $X$. It is well known that the
inclusion $\sh(X)\hookrightarrow \psh(X)$ has a left adjoint
$P\mapsto P^+$ which preserves kernels. It follows that $\sh(X)$ is
an abelian category with enough injective objects \cite{tohoku}. A
sequence
$$0\to P\to Q\to R\to 0$$
is exact in $\sh(X)$ if and only if for any  $x\in X$  the sequence
$$0\to P_x\to Q_x\to R_x\to 0$$
is an exact sequence of abelian groups. Here for any presheaf $F$
and point $x\in X$ the group $F_x$ is defined by
$$F_x={\sf colim}_{x\in U}F(U).$$

Any abelian group $A$ gives rise to the constant presheaf with value
$A$, which we denote by $A_c$. The associated sheaf $A_c^+$ is
called \emph{constant sheaf} and by abuse of notation will be
denoted by $A$. According to \cite{tohoku} the group
$\ext_{\sh(X)}^*(\Z,F)$ is called the \emph{cohomology}
 of $X$ with
coefficient in a sheaf $F\in \sh(X)$.

If $F$ is a constant sheaf associated to an abelian group $A$ and
$X$ is a polyhedron then these groups are isomorphic to the singular
cohomology of $X$ with coefficients in $A$ \cite{tohoku}.

Since injective objects in $\sh(X)$ are quite mysterious it is
helpful to use \v{C}ech cohomology. Let $\u$ and $\vu=\{V_j\}_{j\in
J}$ be two covers of $X$. One says that $\vu$ is a \emph{refinement}
of $\u$ (notation $\vu<\u$) if there is a map $\al:J\to I$ such that
$V_j\subset U_{\al(j)}$ for all $j\in J$. Utilizing this map $\al$
we define a morphism of simplicial spaces $$N\al: N\vu\to N\u$$ by
mapping $V_{j_0\cdots j_n}$ to $U_{\al(j_0)\cdots \al(j_n)}$. If
$\bb:J\to I$ is another map with $V_j\subset U_{\bb(j)}$ for each
$j\in J$, then two morphisms $N\al$ and $N\bb$ of simplicial spaces
are homotopic. In fact the homotopy operators $h^k:N_n\vu\to
N_{n+1}\u$, $k=0,\cdots, n$ are defined by mapping $V_{j_0\cdots
j_n}$ to $U_{\al(j_0),\cdots,\al(j_k),\bb(j_k)\cdots \bb(j_n)}$.
Hence for any presheaf $P$ there are canonical homomorphisms
$$H^*(\u, P)\to H^*(\vu, P)$$ and one can define the \v{C}ech
cohomology $\check{H}^i(X,F)$ of $X$ with coefficients in a presheaf
$P$ by
$$\check{H}^i(X,F):={\sf colim}_\u H^*(\u,P)$$
where colimit is taken over all open covers. According to
\cite{tohoku}  for paracompact $X$ and any sheaf $F$ one has an
isomorphism
$$\ext_{\sh(X)}^*(\Z,F)\cong \check{H}^i(X,F).$$
\subsection{Berishvili approach to sheaf cohomology} For arbitrary
$X$ we still have a similar result but we have to use so-called
Berishvili covers  instead \cite{beri}. A \emph{Berishvili cover} of
a topological space $X$ is a function $\al$ which assigns to some
ordered tuples $(x_0,\cdots ,x_n)$ of points of $X$ an open subset
$\al(x_0,\cdots,x_n)$ of $X$ such that the following conditions
i)-iv) hold.

 i) $x_n\in
\al(x_0,\cdots,x_n)$,

ii) if $\al(x_0,\cdots,x_n)$ is defined then
$$\al(x_0,\cdots,\hat{x_i}\cdots,x_n):=\al(x_0,\cdots,x_{i-1},x_{i+1},\cdots,x_n)$$
is also defined and $\al(x_0,\cdots,x_n)\subset
\al(x_0,\cdots,\hat{x_i}\cdots,x_n)$.

iii) If $\al(x_0,\cdots,x_n)$ is defined and $x\in
\al(x_0,\cdots,x_n)$, then $\al(x_0,\cdots,x_n,x)$ is defined too.

iv) $\al(x)$ is defined for any point $x\in X$.

If $\al$ and $\bb$ are two Berishvili covers then we will say that
$\al$ is a \emph{refinement} of $\bb$ if every time when
$\al(x_0,\cdots,x_n)$ is defined then $\bb(x_0,\cdots,x_n)$ is also
defined and $\al(x_0,\cdots,x_n) \subset \bb(x_0,\cdots,x_n)$.

Having a Berishvili cover $\al$ one can form the  presimplicial
space $B\al$, which is given by $[n]\mapsto \bigsqcup
\al(x_0,\cdots, x_n)$ where the coproduct is taken over all
$(n+1)$-tuples of points $(x_0,\cdots, x_n)$ for which
$\al(x_0,\cdots, x_n)$ is defined. Now having any presheaf $P$ on
$X$ one defines the groups $H^*(\al,P)$ as the cohomology of the
cochain complex $C^*(\al,P)$ which is associated to the
precosimplicial abelian group $$[n]\mapsto
\prod_{(x_0,\cdots,x_n)}P(\al(x_0,\cdots,x_n))$$ where the product
is taken over all $(x_0,\cdots,x_n)\in X^{n+1}$ for which
$\al(x_0,\cdots,x_n)$ is defined. Observe that if $\al$ is a refinement
of $\bb$, then there is a canonical map of simplicial spaces
$B\al\to B\bb$, which induces the homomorphism $C^*(\bb,P)\to
C^*(\al,P)$ and by taking the colimit over all Berishvili covers of
$X$ one obtains the cochain complex $C^*(X,P)$, whose cohomology
groups are denoted by $H^*(X,P)$. Thus
$$H^*(X,P):={\sf colim}_\al H^*(\al,P).$$  Then we have the following fact.
\begin{The}\label{guram} \cite{beri} For any space $X$ and any sheaf $F$  there
is a canonical isomorphism
$$H^*(X,F)\cong \ext_{\sh(X)}^*(\Z,F).$$
\end{The}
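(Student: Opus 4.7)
The plan is to identify Berishvili cohomology $H^*(X,-)$ with the right derived functors of the global sections functor $\Gamma(X,-) = \Hom_{\sh(X)}(\Z,-)$ via Grothendieck's axiomatic characterization: I exhibit $H^*(X,-)$ as an effaceable cohomological $\delta$-functor on $\sh(X)$ agreeing with $\Gamma$ in degree zero, which then forces the canonical isomorphism $H^*(X,F)\cong\ext^*_{\sh(X)}(\Z,F)$. Three verifications are required.

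For degree zero: given any Berishvili cover $\alpha$, the complex $C^*(\alpha,F)$ begins with $\prod_x F(\alpha(x))\to\prod_{x_0,x_1} F(\alpha(x_0,x_1))$. Axiom (iv) ensures $\{\alpha(x)\}_{x\in X}$ is a genuine open cover of $X$, while axioms (i)--(ii) supply the restriction maps into double intersections. The sheaf condition then identifies $H^0(\alpha,F)$ with $F(X)$ compatibly with refinement, so $H^0(X,F)\cong F(X)$ after passage to the colimit.

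For the long exact sequence: given a short exact sequence $0\to F'\to F\to F''\to 0$ in $\sh(X)$, I produce a short exact sequence of cochain complexes
$$0\to C^*(X,F')\to C^*(X,F)\to C^*(X,F'')\to 0$$
at the level of the colimit over Berishvili covers. Left-injectivity and middle-exactness hold termwise; the delicate point is right-surjectivity. Given sections $s_{x_0,\dots,x_n}\in F''(\alpha(x_0,\dots,x_n))$ representing a cochain in $C^n(\alpha,F'')$, stalkwise surjectivity of $F\to F''$ allows one to shrink each $\alpha(x_0,\dots,x_n)$ to a smaller open set on which the section lifts to $F$. Axiom (iii), permitting the extension of a defined tuple by any point of its assigned open set, is precisely what makes this shrinking coherent and yields a refined Berishvili cover $\beta$ on which the whole cochain lifts; the colimit over such refinements then produces the desired exact sequence of complexes, hence the long exact cohomology sequence. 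For effaceability, any sheaf $F$ embeds in its Godement flasque sheaf $\mathscr{G}(F)$ with $\mathscr{G}(F)(U)=\prod_{x\in U} F_x$; for flasque $G$ and any Berishvili cover $\alpha$, a standard Cartan-type cocycle-lifting argument, again exploiting axiom (iii) to iteratively extend tuples, yields $H^n(\alpha,G)=0$ for $n>0$, and hence $H^n(X,G)=0$ in the colimit.

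The main obstacle is the refinement step in the long-exact-sequence argument: given local sections of the quotient sheaf defined only on the Berishvili-distinguished open sets, one must manufacture a refinement of the cover on which all these sections simultaneously admit cochain-level lifts to $F$. The Berishvili axioms, particularly (iii), are engineered precisely so that such refinements exist for \emph{arbitrary} topological spaces, without any paracompactness hypothesis on $X$; this is the technical heart of the approach and what distinguishes it from the classical \v{C}ech theory, whose analogous isomorphism with $\ext^*_{\sh(X)}(\Z,-)$ requires paracompactness.
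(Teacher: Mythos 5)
Your overall strategy is the same as the paper's: Theorem \ref{guram} is proved there in one line by the axiomatic (derived-functor) characterization, resting on parts ii) and v) of Proposition \ref{glema} --- i.e.\ precisely your three verifications (degree-zero agreement, long exact sequence, vanishing on a class of sheaves into which every sheaf embeds). The difference lies in how the two supporting lemmas are obtained. For the long exact sequence the paper never lifts cochains of the quotient sheaf: it replaces $F_2$ by the presheaf cokernel $P$, so that $0\to F_1(V)\to F(V)\to P(V)\to 0$ is exact for every open $V$ and one gets an honest short exact sequence of complexes for each fixed Berishvili cover; the refinement work is then isolated in the statements $H^*(X,Q)=0$ for $Q^+=0$ and $H^*(X,P)\cong H^*(X,P^+)$. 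Your direct proof that $C^*(X,F)\to C^*(X,F'')$ is surjective in the colimit uses the same shrink-and-refine mechanism and does work, but note that the coherence you credit to axiom (iii) is really a verification of axiom (ii) for the refined cover: on tuples longer than $n$ one must intersect $\al$ with the shrunken sets of \emph{all} length-$(n+1)$ subtuples ending at the last point, since the naive ``intersect with the previous one'' assignment need not satisfy axiom (ii). For effaceability the paper is more economical: it proves acyclicity of elementary sheaves $U\mapsto\prod_{x\in U}P_x$ by an explicit contracting homotopy (evaluation at the distinguished point) and then observes that injectives are direct summands of such sheaves; your claim that $H^n(\al,G)=0$ for every flasque $G$ and every Berishvili cover is stronger than needed and is only asserted, not proved --- since the only sheaf you efface with is the Godement sheaf, which is elementary, you should replace the ``Cartan-type'' step by that one-line contraction. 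With these two repairs your argument is a correct, slightly heavier, variant of the paper's proof.
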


\begin{proof} By the well-known axiomatic of derived functors
the result follows from ii) and v) of Lemma \ref{glema} below.
\end{proof}

Recall that a presheaf $P$ on $X$ is elementary if there exists a
collection of abelian groups $P_x$, $x\in X$ such that for any open
set $U\in OP(X)$ one has $$P(U) = \prod_{x\in U} P_x$$ with obvious
restriction morphisms. One easily observes that $P$ is in fact is a
sheaf.

\begin{Pro}\label{glema} \cite{beri} i) For any Berishvili cover $\al$ and for any
elementary presheaf $P$ one has
$$H^n(\al, P)=\begin{cases} P(X),& n=0\\ 0,&n\geq 1\end{cases}$$

ii)  If $I$ is an injective object in $\sh(X)$ then
$$H^n(X,I)=\begin{cases} I(X),& n=0\\ 0,&n\geq 1\end{cases}$$

iii) If $P$ is a presheaf such that $P^+=0$, then $H^*(X,P)=0$.

iv) If $P$ is a presheaf then $H^*(X,P)\cong H^*(X,P^+)$.

v) If $0\to F_1\to F\to F_2\to 0$ is a short exact sequence of
sheaves then one has a long exact  sequence of abelian groups:
$$0\to H^0(X,F_1)\to \cdots \to H^n(X,F)\to H^n(X,F_2)\to
H^{n+1}(X,F_1)\to H^{n+1}(X,F)\to\cdots $$

\end{Pro}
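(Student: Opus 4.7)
The plan is to establish the five assertions in order, with (i) providing the technical backbone and the rest following by comparatively formal arguments. The main obstacle I anticipate is the inductive construction of a refinement in (iii). For (i), I would rewrite the cochain group for elementary $P(U)=\prod_{y\in U}P_y$ as
\[
C^n(\al,P)=\prod_{\sigma\in S_n}\prod_{y\in\al(\sigma)}P_y,
\]
where $S_n$ denotes the set of tuples on which $\al$ is defined. Axioms (i) and (iii) yield a bijection $S_{n+1}\leftrightarrow\{(\sigma,y):\sigma\in S_n,\ y\in\al(\sigma)\}$, and axiom (iv) gives $S_0=X$. Then I would construct a contracting homotopy by the ``duplicate the last coordinate'' extra degeneracy, $(sf)_{\sigma,y}:=f_{(\sigma,y),y}$, which is well-defined thanks to axiom (iii); a routine verification shows $ds+sd=\mathrm{id}$ in positive degrees together with an augmentation to $P(X)$ in degree zero, yielding both claims simultaneously.

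For (ii), every sheaf $F$ embeds into its Godement-type elementary sheaf $\widehat F(U):=\prod_{x\in U}F_x$, and when $F=I$ is injective the embedding splits in $\sh(X)$, hence also in $\psh(X)$. Since $C^*(\al,-)$ is a product of evaluations and therefore preserves retracts, part (i) forces $H^n(\al,I)$ to be a retract of $H^n(\al,\widehat I)$, which is zero for $n\geq 1$ and $\widehat I(X)$ for $n=0$; naturality of the identification in (i) with respect to the splitting $\widehat I\cong I\oplus J$ extracts $I(X)$ as the corresponding summand in degree zero. For (iii), the stronger statement I would aim for is that any cocycle $c\in C^n(\al,P)$ becomes identically zero on a suitable refinement. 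Using $P^+=0$, every germ of every component $c_\sigma\in P(\al(\sigma))$ vanishes, so for each $y\in\al(\sigma)$ there is an open $V_{\sigma,y}\subset\al(\sigma)$ with $c_\sigma|_{V_{\sigma,y}}=0$; I then construct $\bb$ inductively in tuple length, shrinking $\al(x_0,\dots,x_n)$ inside $V_{(x_0,\dots,x_n),x_n}$ while intersecting with the shrinkings already made at shorter lengths to preserve axiom (ii), and extending to longer lengths via axiom (iii). The simultaneous bookkeeping across all tuple lengths---keeping the face-deletion inclusions of axiom (ii) and the closure-under-extension of axiom (iii) coherent with the vanishing conditions---is where I expect the real difficulty to lie.

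For (iv), I factor $P\to P^+$ in $\psh(X)$ through its image, noting that both $\ker_{\psh}(P\to P^+)$ and $\cok_{\psh}(P\to P^+)$ have vanishing sheafification (since sheafification preserves kernels by hypothesis and, as a left adjoint, preserves cokernels). Part (iii) then annihilates their $H^*(X,-)$, and chasing the two resulting long exact sequences---available because $C^*(\al,-)$ is exact on presheaves and directed colimits of abelian groups preserve exactness---gives $H^*(X,P)\cong H^*(X,P^+)$. Part (v) is then immediate: form the presheaf cokernel $Q$ of $F_1\hookrightarrow F$, observe $Q^+=F_2$, take the long exact sequence of the presheaf short exact sequence $0\to F_1\to F\to Q\to 0$ as in (iv), and replace $H^n(X,Q)$ by $H^n(X,F_2)$ using (iv).
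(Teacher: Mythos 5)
Your proposal is correct and follows essentially the same route as the paper: a last-point contraction for elementary presheaves (the paper first reduces to a one-point family and contracts by evaluation), the ``retract of an elementary sheaf'' argument for injectives, killing cochains on a refinement built from vanishing germs when $P^+=0$, and the two short exact sequences through the image of $P\to P^+$ for (iv), with (v) reduced to (iv) via the presheaf cokernel. The bookkeeping you fear in (iii) is handled in the paper by leaving $\al$ unchanged on tuples of length $\le p$, shrinking only to a neighborhood of the \emph{last} point $x_p$ of each $(p+1)$-tuple (so only the germ at $x_p$ matters, not at all $y\in\al(\sigma)$), and defining longer tuples by the recursion $\beta(x_0,\dots,x_n)=\beta(x_0,\dots,x_{n-1})\cap\al(x_0,\dots,x_n)$ whenever $x_n\in\beta(x_0,\dots,x_{n-1})$, after which the cover axioms are immediate.
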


\begin{proof} i) Since the functors $H^n(\al, -):\psh(X)\to \ab$
respect products, it suffices to consider the case when the collection
of abelian groups $(P_x)_{x\in X}$ is nontrivial only in a given
point, say at $x_0\in X$. Then the ``evaluation at $x_0$'' gives
rise to the contraction of the sequence
$$0\to P_{x_0}\to C^0(\al, P)\to C^1(\al,P)\to \cdots$$

ii) For a sheaf $F$ we let $\tilde{F}$ be the elementary sheaf
generated by the collection of groups $F_x$. Then the canonical
morphism of sheaves $F\to \tilde{F}$ is a monomorphism. It follows
that if $I$ is an injective object in $\sh(X)$, then $I$ is a direct
summand of an elementary sheaf. Hence the result follows from i).

iii) In fact we will show that ${\sf colim}_\al C^*(\al,P)=0$.  Take
any $f \in C^p(\al,P)$ and any $(x_0,\cdots,x_p)\in X^{p+1}$ for
which $\al(x_0,\cdots,x_p)$ is defined. Since $P^+=0$ there exists an
open neighborhood $\beta(x_0,\cdots,x_p)$ of $x_p$ such that  the
image of $f(x_0,\cdots,x_p)\in P(\al(x_0,\cdots,x_p))$ in
$P(\beta(x_0,\cdots,x_p))$ is zero. Now we extend the function
$\beta$ by putting
$$\beta(x_0,\cdots,x_n)=\begin{cases} \al(x_0,\cdots,x_n) & {\rm if} \
n<p\\ \beta(x_0,\cdots,x_{n-1})\bigcap \al(x_0,\cdots,x_n) & {\rm
if} \ n>p\ {\rm and} \ x_n\in \beta(x_0,\cdots,x_{n-1})\end{cases}$$
One easily sees that $\beta$ is a Berishvili cover which is a
refinement of $\al$. By our construction image of $f$ in
$C^p(\beta,P)$ is zero and the result follows.

iv)  It is clear that the functors $H^n(\al, -):\psh(X)\to \ab$,
$n\geq 0$ form a $\delta$-sequence of functors. Thus the same is
true for $H^n(X,-):\psh(X)\to \ab$, $n\geq 0$. Observe that the
morphism $\xi:P\to P^+$ gives rise to the two   short exact
sequences of presheaves
$$0\to P_1\to P\to\Im(\xi)\to 0, \ \  \ \   \ \ 0\to \Im(\xi)\to P^+\to P_2\to 0$$
with $P_i^+=0$, $i=1,2$. The long cohomological exact sequences
together with iii) give the result.

iv) Since $F_2=P^+$, where $P$  fits in a short exact sequence of
presheaves
$$0\to F_1\to F\to P\to 0,$$
the result follows from iv).
\end{proof}

For paracompact $X$ similar facts are true also for \v{C}ech
cohomology. Hence  for such $X$ and an arbitrary $P\in \psh(X)$ one
has an isomorphism $\check{H}^i(X,P)\cong H^*(X,P)$, because both of
them are isomorphic to $Ext_{\sh(X)}(\Z,P^+)$. We will need more
direct construction of the isomorphism $\check{H}^i(X,P)\cong
H^*(X,P)$.
To do this and also for later use it is convenient
to use special covers and special maps of nerves
in the definition of
\v{C}ech cohomology.  Namely we consider  open covers of the form
$\u=(U_x)_{x\in X}$ where $x\in U_x$. If $\vu=(V_x)_{x\in X}$ is
another such cover, we will write  $\vu \leq \u$ if $V_x\subset U_x$ for
all $x\in X$. If this is the case, then we have a canonical map
$N\vu\to N\u$. Since special covers are cofinal in all covers, if we
take ${\sf colim}_\u H^*(\u,P)$ where $\u$ is running over special
covers and the canonical maps then we obtain again the \v{C}ech
cohomology $\check{H}^i(X,P)$. If $\u$ is such a cover of $X$, then
we can define a Berishvili cover $\al$ as follows. We put
$\al(x)=U_x$, and then by induction on $n$, we define
$\al(x_0,\cdots, x_n)=\al(x_0,\cdots,x_{n-1})\cap \al(x_n)$ provided
$x_n\in \al(x_0,\cdots,x_{n-1})$, otherwise $\al(x_0,\cdots, x_n)$
is not defined. One easily sees that in this way one gets
$H^*(\u,P)\cong H^*(\al,P)$. If one passes to the limit one obtains
the homomorphism $\check{H}^*(X,P)\to H^*(X,P)$. As we said this map
is an isomorphism if $X$ is a paracompact space.

\section{Cohomology with coefficients in prestacks}
\subsection{Preliminaries on prestacks  of abelian 2-groups}
We are assuming that the reader is familiar with the basic notions
and results on stacks and prestacks. Everything what we need  one
can find in \cite{ks}  or \cite{im}. Since the terminology in these
sources  diverges we recall the main definitions.

A \emph{prestack} $\p$ on  $X$ is  a contravariant pseudofunctor
from the category $OP(X)$ to the 2-category $\scg$. Thus it consists
of the following data:

i) for each open set $U$ an abelian 2-group $\p(U)$,

ii) for each pair of open sets $U\subset V$ a morphism of abelian
2-groups $r^V_U:\p(V)\to \p(U)$,

iii) for each triple of open subsets $U\subset V \subset W$ a track
in $\scg$
$$r^V_Ur^W_V\then r^W_U$$
satisfying the well-known properties (see Definition 19.1.3 in
\cite{ks}).

If $\p$ and $\q$ are prestacks on $X$ then  a \emph{morphism of
prestacks} $f:\p\to \q$ (\emph{functor of prestacks} in the
terminology \cite{ks}) is nothing but a pseudonatural
transformation, in other words it consists of:

1) for each open set $U$ a morphism of abelian 2-groups
$f(U):\p(U)\to \q(U)$

2) for each pair of open sets $U\subset V$ a track in $\scg$:
$$f(U)r^V_U \then r^V_U f(V)$$
satisfying the well-known properties (see Definition 19.1.4 in
\cite{ks})

If $f, g:\p\to \q$ are morphisms of prestacks, then a track
$\theta:f\then g$ (a \emph{morphism of functors of prestacks} in the
terminology of \cite{ks}) is nothing but a pseudomodification, in
other words it is given by tracks $\theta(U):f(U)\then g(U)$ in $\scg$ for each open set $U$, satisfying the well-known
condition (see Definition 19.1.5 in \cite{ks}).

Sometimes we will use the notation $a|_U$ instead of $r^V_U(a)$.

Observe that in many sources prestack is called ``fibred category''
(see for example \cite{im}) while the term ``prestack'' is used
for what we will call separated prestack.

Prestacks on $X$ together with the natural morphisms of prestacks
form an abelian 2-category $\pst(X)$ \cite{dupont}. Moreover
$\pst(X)$ possesses  enough projective and injective objects. This
easily follows from \cite{2der}. A morphism $\p\to \q$ of prestacks
is faithful (resp. cofaithful) if and only if for any open set $U$
the induced morphism $\p(U)\to \q(U)$ is  a faithful (resp.
cofaithful) morphism of abelian 2-groups. In particular a sequence
$$0\to \p_1\xto{i} \p\xto{p} \p_2\to 0$$
of prestacks  together with a track $\al:0\then pi$ is an extension
in $\pst(X)$ if and only if for any open subset $U$ of $X$ the
sequence
$$0\to \p_1(U)\to \p(U)\to \p_2(U)\to 0$$
together with the track $\al(U):0\then pi(U)$ is an extension  of
abelian 2-groups.

Let $\p$ be a prestack on $X$, then we obtain two presheaves
$\pi^0\p$ and $\pi^{-1}\p$ on $X$ by $U\mapsto \pi^i(\p(U))$,
$i=0,-1$.

\subsection{\v{C}ech and Berishvili cohomology with coefficients in
prestacks}

Let $\mathfrak{U}= \{U_i\}_{i\in I}$ be an open cover of an open set
$U$.  If $\p$ is a prestack on $X$ one obtains a precosimplicial
object in $\scg$:
$$[n]\mapsto \prod_{i_0,\cdots,i_n\in I}\p(U_{i_0\cdots i_n})$$
We let ${\bf H}^n(\u,\p )$ be the secondary cohomology of the
2-cochain complex $C^*(\u,\p)$  associated to it and we let
$H^n_U(\u,\p )$ be the corresponding Takeuchi-Ulbrich cohomology
groups. The obvious augmentation $N\u\to U$ yields the homomorphisms
$$\p(U)\to {\bf H}^0(\u,\p )$$
and passing to $\pi^i$, $i=0,-1$ one obtains the homomorphisms
$$\pi^0(\p(U))\to H^0_U(\u,\p ), \ \ \pi^{-1}(\p(U))\to H^{-1}_U(\u,\p
).$$

Moreover, we put
$$\check{{\bf H}}^n(X,\p):={\sf colim}_\u {\bf H}^n(\u,\p ),$$
where $\u$ varies over all special covers and canonical maps between
the corresponding nerves.  In this way one obtains pseudofunctors
$\check{{\bf H}}^n(X,-):\pst(X)\to \scg$, $n\in \Z$. Below we use
standard terminology of abelian 2-categories, see \cite{2-ch},
\cite{dupont}.

\begin{Pro}  i) If
$$0\to \p_1\to \p\to \p_2\to 0$$
is an extension of prestacks then
$$\cdots \to \check{{\bf H}}^n(X,\p_1 )\to \check{{\bf H}}^n(X,\p ) \to \check{{\bf
H}}^n(X,\p_2 ) \to \check{{\bf H}}^{n+1}(X,\p_1 )\to \cdots$$ is
part of a 2-exact sequence of abelian 2-groups, while
$$\cdots \to \check{H}^n_U(X,\p_1) \to \check{H}^n_U(X,\p ) \to \check{H}^n_U(X,\p_2) \to \check{H}^{n+1}_U(X,\p_1
)\to \cdots$$ is an exact sequence of abelian groups.

ii) For any prestack $\p$ there is an exact sequence of abelian
groups
$$\cdots \to \check{H}^{n+1}(X,\pi^{-1}(\p))\to \check{H}^n_U(X,\p)\to
\check{H}^n(X,\pi^0(\p))\to \check{H}^{n+2}(X,\pi^{-1}(\p))\to
\cdots
$$

\end{Pro}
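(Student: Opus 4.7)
The plan is to work cover-by-cover, invoking the long 2-exact sequence and the Takeuchi--Ulbrich exact sequence from Section 2.2 for the 2-cochain complex $C^*(\u, \p)$, and then to pass to the filtered colimit over special covers $\u$.

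For (i), fix such a cover $\u$. An extension $0 \to \p_1 \to \p \to \p_2 \to 0$ of prestacks evaluates at every open set $U_{i_0 \cdots i_n}$ to an extension of abelian 2-groups, since by the characterization recalled just before the definition of $\check{\bf H}^n$ the extension condition is pointwise on $OP(X)$. Products in $\scg$ preserve faithful and cofaithful morphisms as well as kernels (being computed componentwise on $\pi^0$ and $\pi^{-1}$), so the level-wise products assemble into an extension at each cosimplicial degree; the coherence tracks $\al^n : 0 \Rightarrow p^n i^n$ and their compatibility across the coface maps are induced from those of the input extension. Hence we obtain an extension $0 \to C^*(\u, \p_1) \to C^*(\u, \p) \to C^*(\u, \p_2) \to 0$ of 2-cochain complexes, and the long 2-exact sequence quoted at the end of Section 2.2 yields a 2-exact sequence of the ${\bf H}^n(\u, -)$; applying $\pi^0$, which takes 2-exact sequences to exact sequences of abelian groups, gives the corresponding $H^n_U(\u, -)$-sequence. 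Finally, ${\sf colim}_\u$ over special covers preserves 2-exactness, because directed colimits in $\scg$ commute with $\pi^i$ ($i = 0, -1$) by the formula stated in Section 2.1 and filtered colimits of abelian groups are exact.

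For (ii), apply the TU-exact sequence of Section 2.2 to the 2-cochain complex $C^*(\u, \p)$ for a single special cover $\u$. Since $\pi^0$ and $\pi^{-1}$ commute with products in $\scg$, and the cosimplicial maps are induced from restriction morphisms (which also commute with $\pi^i$), there are identifications $\pi^i C^*(\u, \p) \cong C^*(\u, \pi^i \p)$ of cochain complexes in $\ab$, whose classical cohomology is $H^*(\u, \pi^i \p)$. Substituting into the TU-exact sequence produces
$$\cdots \to H^{n+1}(\u, \pi^{-1}\p) \to H^n_U(\u, \p) \to H^n(\u, \pi^0\p) \to H^{n+2}(\u, \pi^{-1}\p) \to \cdots$$
Passing to the filtered colimit over $\u$, which is exact in $\ab$ and intertwines with $\check H^n$ and with $\check H^n_U$ by construction, yields the claimed sequence.

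The main technical point is the first step of (i): one must verify that an extension of prestacks really does yield an extension of 2-cochain complexes, i.e., that the product bifunctor in $\scg$ preserves extensions and that the cosimplicial coherence data respect the extension tracks. This is a bookkeeping task relying on the characterization of faithful and cofaithful morphisms via $\pi^{-1}$ and $\pi^{0}$ recalled in Section 2.1, but it is the one place where the 2-dimensional structure requires genuine care.
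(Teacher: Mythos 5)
Your proposal is correct and follows the same route as the paper: evaluate the extension of prestacks levelwise (the extension condition being pointwise on $OP(X)$), assemble via products into an extension of 2-cochain complexes $0\to C^*(\u,\p_1)\to C^*(\u,\p)\to C^*(\u,\p_2)\to 0$, invoke the long 2-exact sequence of secondary cohomology from \cite{2-ch} together with the TU-exact sequence, and pass to the filtered colimit over covers, which preserves 2-exactness since $\pi^i$ commutes with directed colimits. The only difference is that you spell out the bookkeeping (products preserving faithful/cofaithful morphisms and kernels, compatibility of the tracks) that the paper leaves implicit.
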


\begin{proof} i) Let $\u$ be an open cover of $X$. Then one has
an extension of 2-chain complexes in $\scg$
$$0\to C^*(\u,\p_1)\to C^*(\u,\p)\to C^*(\u,\p_2)\to 0.$$
Thanks to \cite{2-ch} we obtain the following 2-exact sequence of
secondary cohomology:
$$\cdots \to {\bf H}^n(\u,\p_1 )\to {\bf H}^n(\u,\p ) \to {\bf H}^n(\u,\p_2
)\to {\bf H}^{n+1}(\u,\p_1 )\to \cdots $$ Since the filtered colimit
of 2-exact sequences of abelian 2-groups remains 2-exact  the result
follows. A similar argument based on the  TU-exact sequence for the
2-chain complex $C^*(\u,\p)$  gives ii).

\end{proof}

Observe that presheaves can be considered as discrete prestacks. So
we have an obvious inclusion $\psh(X)\subset \pst(X)$ and if we
restrict $\check{H}^n_U(X,-):\pst(X)\to \ab$ on $\psh(X)$ one
obtains the usual \v{C}ech cohomology.

If one takes Berishvili covers instead we obtain the well-defined
abelian 2-groups ${\bf H}^n(X,\p )$ and abelian groups $H^*_U(X,\p)$
with similar properties.

\begin{Pro}\label{3} i) If
$$0\to \p_1\to \p\to \p_2\to 0$$
is an extension of prestacks then
$$\cdots \to {\bf H}^n(X,\p_1 )\to {\bf H}^n(X,\p ) \to {\bf
H}^n(X,\p_2 ) \to {\bf H}^{n+1}(X,\p_1 )\to \cdots$$ is part of a
2-exact sequence of abelian 2-groups, while
$$\cdots \to H^n_U(X,\p_{1}) \to H^n_U(X,\p ) \to H^n_U(X,\p_2) \to H^{n+1}_U(X,\p_1
)\to \cdots$$ is an exact sequence of abelian groups.

ii) For any prestack $\p$ there is an exact sequence of abelian
groups
$$\cdots \to H^{n+1}(X,\pi^{-1}(\p))\to H^n_U(X,\p)\to
H^n(X,\pi^0(\p))\to H^{n+2}(X,\pi^{-1}(\p))\to \cdots
$$

iii) There is a  morphism of  abelian 2-groups $\check{{\bf
H}}^*(X,\p)\to {\bf H}^*(X,\p)$, which is an equivalence provided
$X$ is paracompact.

\end{Pro}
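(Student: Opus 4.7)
The plan is to imitate the proof of the preceding \v{C}ech version of this proposition for parts (i) and (ii), substituting Berishvili covers for special open covers, and to handle (iii) by building a comparison morphism and reducing to the classical presheaf statement via part (ii) and the five-lemma.

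For (i), fix a Berishvili cover $\al$. An extension of prestacks is pointwise an extension of abelian 2-groups (as noted in the preceding subsection), and products in $\scg$ preserve extensions; evaluating on the levels of $B\al$ thus yields an extension of precosimplicial objects, hence an extension of 2-cochain complexes
$$
0 \to C^*(\al, \p_1) \to C^*(\al, \p) \to C^*(\al, \p_2) \to 0
$$
in $\scg$. The long 2-exact sequence for secondary cohomology and the Takeuchi--Ulbrich long exact sequence, both recalled in the preliminaries, then produce the desired sequences at the level of the fixed cover. Filtered colimits in $\scg^I$ preserve 2-exactness and exactness (also recalled in the preliminaries), so passing to the colimit over all Berishvili covers of $X$ yields (i).

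For (ii), the additive 2-functor $\pi^i$ commutes with products, so $\pi^i C^*(\al, \p) = C^*(\al, \pi^i \p)$ as cochain complexes of abelian groups for $i = 0, -1$. The TU-exact sequence of the 2-cochain complex $C^*(\al, \p)$ recalled in the preliminaries then reads
$$
\cdots \to H^{n+1}(\al, \pi^{-1}\p) \to H^n_U(\al, \p) \to H^n(\al, \pi^0\p) \to H^{n+2}(\al, \pi^{-1}\p) \to \cdots,
$$
and the filtered colimit over all Berishvili covers of $X$ gives (ii).

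For (iii), to each special open cover $\u = (U_x)_{x \in X}$ associate the Berishvili cover $\al_\u$ from the end of the sheaf section, which satisfies $\al_\u(x_0, \ldots, x_n) = U_{x_0 \cdots x_n}$ whenever defined. Projection from $\prod_{(x_0,\ldots,x_n) \in X^{n+1}} \p(U_{x_0 \cdots x_n})$ onto the factors indexed by tuples at which $\al_\u$ is defined is a morphism of precosimplicial abelian 2-groups $C^*(\u, \p) \to C^*(\al_\u, \p)$; the composite ${\bf H}^n(\u, \p) \to {\bf H}^n(\al_\u, \p) \to {\bf H}^n(X, \p)$ is natural in refinements of special covers and assembles into the desired morphism $\check{\bf H}^n(X, \p) \to {\bf H}^n(X, \p)$. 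Suppose now $X$ is paracompact. Part (ii) and its \v{C}ech analogue from the previous proposition furnish a morphism of TU-exact sequences; on the ordinary presheaves $\pi^0 \p$ and $\pi^{-1}\p$ the comparison $\check H^*(X, -) \to H^*(X, -)$ is an isomorphism by the classical fact recalled at the end of the sheaf section. The five-lemma then gives $\check H^n_U(X, \p) \cong H^n_U(X, \p)$. Since $\pi^0({\bf H}^n) = H^n_U$ and $\pi^{-1}({\bf H}^n) = H^{n-1}_U$ (and the same for $\check{\bf H}$), the morphism is an isomorphism on $\pi^0$ and on $\pi^{-1}$, hence an equivalence of abelian 2-groups.

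The main bookkeeping obstacle lies in the coherence of tracks: upgrading the pointwise extension of abelian 2-groups to a genuine extension of 2-cochain complexes in (i) requires checking the compatibility of the tracks $0 \Rightarrow p^n i^n$ across successive levels, and the projection morphism in (iii) must be verified to respect the precosimplicial coherence isomorphisms $\alpha_{i,j}$; these are straightforward but tedious unravellings of the pseudofunctor axioms for $\p$.
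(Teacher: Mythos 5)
Your proposal is correct and follows essentially the same route as the paper: parts (i) and (ii) are handled exactly as in the \v{C}ech case (a levelwise extension of 2-cochain complexes, the 2-exact and TU long exact sequences from \cite{2-ch}, then passage to the filtered colimit over Berishvili covers), and (iii) uses the comparison map induced by the Berishvili cover associated to a special open cover, the classical presheaf isomorphism for paracompact $X$, the 5-lemma on the TU-exact sequences, and the $\pi^0$/$\pi^{-1}$ criterion of Lemma \ref{TUzero} to upgrade the isomorphism on $H_U^*$ to an equivalence of abelian 2-groups. This matches the paper's argument, which simply refers back to the \v{C}ech proposition for (i) and (ii) and sketches exactly this 5-lemma reduction for (iii).
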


\begin{proof} i) and ii) have the same proofs as in the previous
case.  To prove iii) observe that, there is a morphism from \v{C}ech
cohomology to the Berishvili cohomology which is isomorphism for all
presheaves.  It follows from the 5-lemma and TU-exact sequence that
it induces isomorphism
$$\check{
H}_U^*(X,\p)\to H_U^*(X,\p)$$ for any $\p$ and the result follows.
\end{proof}

\subsection{Cohomology with coefficients in constant and elementary prestacks}
As we said any abelian 2-group
$\A$ gives rise to the constant prestack, denoted by $\A_c$.

\begin{Pro}\label{4} For a polyhedron $X$ one has an isomorphism
$$\check{H}^*_U(X,\A_c)\cong H^*(X,{\sf sp}(\A))$$
\end{Pro}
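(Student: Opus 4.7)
The plan is to combine the theorem preceding Section 4, which gives $H^*(X,{\sf sp}(\A))\cong H^*_U(\A^{Sing_*(X)})$, with a nerve-theorem comparison; that reduces the task to constructing a natural isomorphism $\check{H}^*_U(X,\A_c)\cong H^*_U(\A^{Sing_*(X)})$. Since a polyhedron is paracompact, Proposition \ref{3}(iii) permits me to pass between $\check{H}^*_U(X,\A_c)$ and the Berishvili version $H^*_U(X,\A_c)$ whenever convenient.

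First I would choose a triangulation of $X$ fine enough that the family of open stars of vertices is a good cover, and reindex it by points of $X$ (sending each $x$ to the star of a vertex of its carrier simplex) to obtain a special good cover $\u$. Because every non-empty finite intersection $U_{x_0\cdots x_n}$ is contractible, hence connected, the constant prestack $\A_c$ is constantly $\A$ with identity restriction maps on these intersections; the precosimplicial abelian 2-group underlying $C^*(\u,\A_c)$ therefore coincides with $\A^{N\u}$, where $N\u$ is the nerve of $\u$ viewed as a simplicial set. The nerve theorem gives a homotopy equivalence $|N\u|\simeq X$, and hence a weak equivalence of simplicial sets $N\u\to Sing_*(|N\u|)\to Sing_*(X)$. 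Applying $\A^{(-)}$ yields a morphism $\A^{Sing_*(X)}\to \A^{N\u}$ of precosimplicial abelian 2-groups. Applying $\pi^0$ and $\pi^{-1}$ gives maps of cosimplicial abelian groups that induce isomorphisms on cohomology by classical homotopy invariance; the TU-exact sequence for each side plus the five-lemma then deliver an isomorphism $H^*_U(\A^{Sing_*(X)})\cong H^*_U(\u,\A_c)$.

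It remains to check that ${H}^*_U(\u,\A_c)\to \check{H}^*_U(X,\A_c)$ is an isomorphism, i.~e. that the colimit stabilizes at $\u$. Any special refinement $\u'\le\u$ admits a further good refinement (good special covers being cofinal on a polyhedron), and the argument of the previous paragraph identifies each $H^*_U(\u',\A_c)$ with $H^*_U(\A^{Sing_*(X)})$, so the refinement maps are isomorphisms. The main obstacle is verifying that the three TU-exact sequences in play — on $C^*(\u,\A_c)$, on $\A^{Sing_*(X)}$, and the one implicit in the prior theorem — are compatible, in particular that the connecting homomorphism $H^n(X,\pi^0(\A))\to H^{n+2}(X,\pi^{-1}(\A))$ they share really is the Postnikov $k$-invariant of ${\sf sp}(\A)$. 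This naturality is exactly what the earlier theorem packages, so what is left to check is only the routine naturality in $\A$ of the simplicial-set-level comparison $N\u\to Sing_*(X)$.
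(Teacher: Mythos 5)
Your proposal is, in substance, the paper's own proof: triangulate $X$, take the open star cover, identify $C^*(\u,\A_c)$ with $\A$ applied to the nerve (which is $s(T)$, weakly equivalent to $Sing_*(X)$), and transfer the classical abelian-group comparison through the TU-exact sequence and the 5-lemma. The only place where your write-up is weaker is the passage to the colimit over special covers: from the fact that $H^*_U(\u,\A_c)$ and $H^*_U(\u',\A_c)$ are each abstractly isomorphic to $H^*_U(\A^{Sing_*(X)})$ you cannot yet conclude that the refinement map between them is an isomorphism — you need the comparison triangle to commute (up to homotopy), and the compatibility you single out as the main obstacle, identifying the shared connecting map with the Postnikov invariant of ${\sf sp}(\A)$, is beside the point, since the TU-sequence is natural for any morphism of 2-cochain complexes and no identification of the $k$-invariant is required. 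The paper sidesteps this by going straight to the colimit: the single star-cover computation yields a natural map $H^*(X,{\sf sp}(\A))\to\check{H}^*_U(X,\A_c)$, and since \v{C}ech $\cong$ singular cohomology on a polyhedron is classical for the constant abelian coefficients $\pi^0\A$ and $\pi^{-1}\A$, the 5-lemma applied to the two TU-exact sequences finishes. The same device closes your gap more cheaply than the naturality check you defer: apply TU plus the 5-lemma directly to each refinement map between good special covers (its outer terms are classical refinement maps between good covers of constant sheaves, hence isomorphisms), or directly to the map into the colimit. Incidentally, goodness of the cover is irrelevant to the identification $C^*(\u,\A_c)\cong\A^{N\u}$, since the constant prestack is $\A$ with identity restrictions on every open set; contractibility of intersections is used only for the nerve theorem.
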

\begin{proof} We can assume that $X$ has a triangulation $T$. We take $\u$
to be the open cover of $X$ formed by the open stars of vertices of
$T$. It is well known that the nerve of this cover as a simplicial
set is isomorphic to a simplicial set $s(T)$ associated to $T$ (see
for example, Section 9.9 of \cite{es}). Thus we have isomorphisms
$$H^*(X,{\sf sp}(\A))=H^*(\A^{Sing_*(X)})\cong H^*(\A^{s(T)})\cong
\check{H}^*(\u,\A_c).$$ This gives a natural transformation
$H^*(X,{\sf sp}(\A)) \to \check{H}^*(X,\A_c)$. Since the
corresponding statement is well known for abelian groups, we can use
the TU-exact sequence and 5-lemma to finish the proof.
\end{proof}

A prestack  $\p$ on $X$ is \emph{elementary} if there exists a
collection of abelian 2-groups  $(\A_x)_{x\in X}$ such that for any
open set $X\in OP(X)$ we have
$$\p(U)=\prod_{x\in U}\A_x$$
with obvious restriction morphisms.

\begin{Le}\label{el} If $\p$ is an elementary prestack on $X$, then
$H^n_U(X,\p)=0$ if $n>0$. Moreover, we have ${\bf H}^n(X,\p)=0$ for
$n>1$ and ${\bf H}^1(X,\p)$ is a connected abelian 2-group.
\end{Le}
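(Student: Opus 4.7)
The plan is to reduce both statements to the presheaf version (Proposition \ref{glema} i)) by way of the Takeuchi--Ulbrich long exact sequence provided by Proposition \ref{3} ii). The first observation is that for an elementary prestack $\p$ with fibre data $(\A_x)_{x\in X}$, the presheaves $\pi^0\p$ and $\pi^{-1}\p$ are themselves elementary, with fibre data $(\pi^0\A_x)_{x\in X}$ and $(\pi^{-1}\A_x)_{x\in X}$: indeed $\pi^0$ and $\pi^{-1}$ commute with products of abelian 2-groups, since components of a product groupoid are pairs of components and an automorphism of a zero object in a product is a family of automorphisms of the zero objects in the factors. Proposition \ref{glema} i) then yields $H^n(X,\pi^0\p)=0=H^n(X,\pi^{-1}\p)$ for every $n\ge 1$.

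Next I would feed these vanishings into the TU-exact sequence
$$\cdots\to H^{n+1}(X,\pi^{-1}\p)\to H^n_U(X,\p)\to H^n(X,\pi^0\p)\to H^{n+2}(X,\pi^{-1}\p)\to\cdots$$
from Proposition \ref{3} ii). For each $n\ge 1$ both neighbouring terms are zero, so $H^n_U(X,\p)=0$, which is the first assertion.

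For the secondary cohomology part I would invoke the identities $\pi^0{\bf H}^n(\A^*)=H^n_U(\A^*)$ and $\pi^{-1}{\bf H}^{n+1}(\A^*)\cong H^n_U(\A^*)$ recorded just before Lemma \ref{TUzero}, applied to the 2-cochain complex $C^*(\alpha,\p)$ for each Berishvili cover $\alpha$. Since filtered colimits of abelian 2-groups are exact and commute with $\pi^i$ (end of Section 2.1), these identifications pass through the colimit over $\alpha$, giving $\pi^0{\bf H}^n(X,\p)=H^n_U(X,\p)$ and $\pi^{-1}{\bf H}^n(X,\p)\cong H^{n-1}_U(X,\p)$. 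For $n\ge 2$ both of these vanish by the first part, hence ${\bf H}^n(X,\p)$ is equivalent to the zero object. For $n=1$ the same identification yields $\pi^0{\bf H}^1(X,\p)=H^1_U(X,\p)=0$, which is precisely the assertion that ${\bf H}^1(X,\p)$ is connected.

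I do not anticipate any genuine obstacle; the only point requiring a bit of care is verifying that the identifications $\pi^0\circ{\bf H}^n=H^n_U$ and $\pi^{-1}\circ{\bf H}^{n+1}=H^n_U$ are preserved by the colimit over Berishvili covers, and this is handled by the exactness of filtered colimits in $\scg$. The essential content of the argument sits in Proposition \ref{glema} i); everything else is formal bookkeeping at the 2-categorical level.
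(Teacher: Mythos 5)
Your argument is correct and is essentially the paper's own proof: reduce to the elementary presheaves $\pi^0\p$, $\pi^{-1}\p$, apply Proposition \ref{glema} i), feed the vanishing into the TU-exact sequence, and then read off the secondary statement from $\pi^0({\bf H}^n)=H^n_U$ and $\pi^{-1}({\bf H}^n)\cong H^{n-1}_U$. The extra remarks about compatibility with the colimit over Berishvili covers are fine but not needed beyond what the paper already takes as definitional.
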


\begin{proof} If $\p$ is an elementary prestack, then $\pi^{i}\p$ is an
elementary presheaf, $i=0,-1$. Hence $H^n(X,\pi^i\p)=0$ for $n>0$
thanks to \ref{glema}. Now by the TU-exact sequence we get
$H^n_U(X,\p)=0$ if $n>0$. By definition $\pi^0({\bf
H}^n(X,\p))=H^n_U(X,\p)=0$ for $n>0$. On the other hand
$\pi^{-1}({\bf H}^n(X,\p))=H^{n-1}_U(X,\p)=0$ if $n>1$ and the
result follows.
\end{proof}

\section{Cohomology with coefficients in stacks}
\subsection{Preliminaries on stacks}

For a prestack $\p$ and a point $x\in X$ we put
$$\p_x={\sf colim}_{x\in U}\p(U).$$
Then $(-)_x:\pst\to \scg$ is an exact pseudofunctor for any $x\in
X$.

A morphism of prestacks $f:\p\to \q$ is called a \emph{weak
equivalence} (see Definition 2.3 of \cite{im}) if for any open
subset $U$ the induced functor $\p(U)\to \q(U)$ is fully faithful
and locally surjective on objects, in the sense that for any object
$a\in \q(U)$ and every $x\in U$ there exist an open set $V$ with
$x\in V\subset U$, an object $b\in \p(V)$ and an isomorphism
$f(V)(b)\to r^U_V(a)$.

For a prestack $\p$  we let $\Pi^i\p$ be the sheaves on $X$
associated to the presheaves $\pi^{i}\p$, $i=0,-1$
$$\Pi^i\p=(\pi^{i}\p)^+, \ \ i=0,-1.$$
We have $$(\Pi^i\p)_x\cong \pi^i(\p_x)$$ for all $x\in X$ and
$i=0,-1.$

\begin{Le}\label{if_w_e} If $f:\p\to \q$ is a weak
equivalence of prestacks then $\p_x\to \q_{x}$ is an equivalence
for all $x\in X$.

\end{Le}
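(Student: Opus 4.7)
The plan is to use the characterization recalled in the preliminaries: a morphism of abelian 2-groups is an equivalence if and only if it induces isomorphisms on $\pi^0$ and $\pi^{-1}$. Combined with the fact established earlier that $\pi^i$ commutes with filtered colimits in $\scg$, it suffices to show that the induced maps
\[
\pi^i(f_x)\colon\ {\sf colim}_{x\in U}\pi^i(\p(U))\ \lto\ {\sf colim}_{x\in U}\pi^i(\q(U)),\quad i=0,-1,
\]
are isomorphisms of abelian groups.

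For $i=-1$ this is essentially formal: by definition $\pi^{-1}(\p(U))=\mathrm{Hom}_{\p(U)}(0,0)$, and since $f(U)$ is fully faithful for each open $U$, the induced map $\pi^{-1}(\p(U))\to\pi^{-1}(\q(U))$ is already an isomorphism. Passing to the filtered colimit over $U\ni x$ gives the desired isomorphism on $\pi^{-1}$ of the stalk.

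For $i=0$ I would check surjectivity and injectivity by hand, using both hypotheses. Surjectivity: given a class in $\pi^0(\q_x)$, represent it by some $a\in\q(U)$ with $x\in U$. By local surjectivity on objects, shrink $U$ to an open $V\ni x$ and find $b\in\p(V)$ together with an isomorphism $f(V)(b)\cong r^U_V(a)$ in $\q(V)$, which exhibits the given class as the image of $[b]\in\pi^0(\p_x)$. Injectivity: if $b_1\in\p(U_1)$ and $b_2\in\p(U_2)$ have the same image in $\pi^0(\q_x)$, then after passing to some common open $W\subset U_1\cap U_2$ with $x\in W$ there is an isomorphism $r^{U_1}_W f(U_1)(b_1)\cong r^{U_2}_W f(U_2)(b_2)$ in $\q(W)$; using the pseudonaturality tracks of $f$ this becomes an isomorphism $f(W)(r^{U_1}_W b_1)\cong f(W)(r^{U_2}_W b_2)$, which by full faithfulness of $f(W)$ lifts to an isomorphism between $r^{U_1}_W b_1$ and $r^{U_2}_W b_2$ in $\p(W)$, so $[b_1]=[b_2]$ in $\pi^0(\p_x)$.

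The only part requiring any care is the injectivity step for $\pi^0$, where one must thread the pseudofunctor coherence tracks $r^V_U r^W_V\then r^W_U$ of both $\p$ and $\q$ together with the pseudonaturality tracks of $f$; since filtered colimits of groupoids allow one to freely shrink to a smaller neighborhood $W$, these coherences do not cause any genuine obstruction, only notational overhead. Once isomorphisms on both $\pi^0$ and $\pi^{-1}$ are established, the conclusion that $f_x$ is an equivalence of abelian 2-groups follows from the characterization of equivalences in $\scg$ recalled in Section~2.
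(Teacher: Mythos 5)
Your proposal is correct and follows essentially the same route as the paper: reduce to checking $\pi^{0}$ and $\pi^{-1}$ of the stalks via the colimit description, use full faithfulness to get the isomorphism on $\pi^{-1}$ and injectivity on $\pi^{0}$, and local surjectivity on objects for surjectivity on $\pi^{0}$. The only cosmetic difference is that you verify injectivity on $\pi^{0}$ directly in the colimit, whereas the paper notes the monomorphism $\pi^{0}\p(U)\to\pi^{0}\q(U)$ at each open set and passes it through the filtered colimit.
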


\begin{proof} The induced map $\pi^i\p(U)\to \pi^i \q(U)$ is an
isomorphism for $i=-1$ and a monomorphism if $n=0$. Since
$$\pi^i(\p_x)={\sf colim}_{x\in U}\pi^i(\p(U)), \ \  \pi^i(\q_x)={\sf colim}_{x\in U}\pi^i(\q(U))$$
it follows that $\p_x\to \q_{x}$ yields an isomorphism
$\pi^{-1}(\p_x)\to \pi^{-1}(\q_x)$ and a monomorphism
$\pi^{0}(\p_x)\to \pi^{0}(\q_x)$. So far we used only full
faithfulness of the functors $\p(U)\to \q(U)$. Since $f$ is locally
surjective on objects the induced homomorphism  $\pi^{0}(\p_x)\to
\pi^{0}(\q_x)$  is obviously epimorphism and we are done.
\end{proof}

A prestack of abelian 2-groups $\p$ is called \emph{separated} (in
\cite{im} the corresponding objects are called simply prestacks) provided for any open
set $U$ and any objects $a,b$ of $\p(U)$ the presheaf $V\mapsto
Hom_{\p(V)}(a|_V,b|_V)$ is a sheaf on $U$. Here $V\subset U$ varies
over all open subsets. It follows that $\Pi^{-1}\p=\pi^{-1}\p$
provided $\p$ is separated.

A prestack of abelian 2-groups $\p$ is called a \emph{stack}
provided for any open set $U$ and any open cover $\mathfrak{U}=
\{U_i\}_{i\in I}$ the canonical morphism of abelian 2-groups
$\p(U)\to {\bf H}^0(\mathfrak{U},\p )$ is an equivalence of
categories. Observe that  the relative kernel of the diagram
obtained by taking the alternating sum of coface operators in
$$\prod_{i}\p(U_i)\begin{smallmatrix}\lra\\\lra\end{smallmatrix}\prod_{i,j}\p(U_{ij})\begin{smallmatrix}\lra\\\lra\\\lra\end{smallmatrix}\prod_{ijk}\p(U_{ijk})$$
is equivalent to the category od descent data \cite{im}. Hence our
definition is equivalent  to the classical definition of a stack
\cite{ks}. We let $\st$ be the full sub-2-category of $\pst$
consisting of stacks.

It is well known that the inclusion $\st\subset \pst$ has a left
adjoint $\p\mapsto \p^+$ which preserves relative kernels. Hence it
follows that  $\st$  is an abelian 2-category with enough injective
objects and the inclusion $\st\subset \pst$ respects (relative)
kernels \cite{ab-2-ab}.

\begin{Le}

i) If $\f$ is a stack then
$$\Pi^{-1}\f=\pi^{-1}\f .$$
ii) If additionally $\pi^{-1}\f=0$ then $$\Pi^0\f\cong \pi^0\f .$$

\end{Le}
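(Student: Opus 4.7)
My approach is to unpack both parts into checking the sheaf axioms for $\pi^{-1}\f$ and $\pi^0\f$ directly, exploiting the fact that a stack in particular satisfies the separated condition (full faithfulness of $\p(U)\to{\bf H}^0(\mathfrak{U},\p)$) together with effectivity of descent data.

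For part (i), I would observe that the equivalence $\f(U)\to{\bf H}^0(\mathfrak{U},\f)$ is in particular fully faithful, which forces $\f$ to be separated: for any $U$ and any $a,b\in\f(U)$, the presheaf $V\mapsto\mathrm{Hom}_{\f(V)}(a|_V,b|_V)$ on $U$ is a sheaf. Specializing to $a=b=0_{\f(U)}$, and using that the restriction functors $r^U_V$, being symmetric monoidal, carry $0$ to $0$ up to canonical isomorphism, identifies this presheaf with $(\pi^{-1}\f)|_U$. Hence $\pi^{-1}\f$ is already a sheaf, so its sheafification map $\pi^{-1}\f\to\Pi^{-1}\f$ is an isomorphism.

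For part (ii), by part (i) the hypothesis $\pi^{-1}\f=0$ means $\mathrm{Aut}_{\f(V)}(0)=0$ for every open $V$, and therefore (translating automorphism groups by summing with the object) $\mathrm{Aut}_{\f(V)}(c)=0$ for every object $c$ of $\f(V)$. This is the key input, and it makes both sheaf axioms for $\pi^0\f$ essentially automatic. For separatedness: if $a,b\in\f(U)$ with $a|_{U_i}\cong b|_{U_i}$ for a cover $\{U_i\}$, triviality of automorphism groups makes the chosen isomorphisms $\varphi_i:a|_{U_i}\to b|_{U_i}$ unique and forces them to agree on overlaps, so the sheaf property of $\mathrm{Hom}(a|_{(-)},b|_{(-)})$ yields a global $\varphi:a\to b$, which is then an isomorphism since invertibility can be checked locally. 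For local effectivity: given $a_i\in\f(U_i)$ with $a_i|_{U_{ij}}\cong a_j|_{U_{ij}}$, choose any isomorphisms $\varphi_{ij}:a_j|_{U_{ij}}\to a_i|_{U_{ij}}$; the potential obstruction $\varphi_{ij}\varphi_{jk}\varphi_{ik}^{-1}\in\mathrm{Aut}_{\f(U_{ijk})}(a_i|_{U_{ijk}})$ vanishes because that automorphism group is trivial, so the cocycle condition holds and the stack condition (effective descent) produces $a\in\f(U)$ with $a|_{U_i}\cong a_i$, giving the desired class in $\pi^0\f(U)$.

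I expect the only non-bookkeeping point to be the step where I claim the failure of the cocycle condition lives in an automorphism group and is therefore killed by $\pi^{-1}\f=0$; everything else is a routine translation between the categorical data of the stack $\f$ and the presheaves $\pi^i\f$. Note that separatedness of $\f$ is used twice: once in part (i), and again in part (ii) to supply the sheaf of morphisms needed for the gluing argument.
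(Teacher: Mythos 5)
Your proof is correct and follows essentially the same route as the paper: part (i) rests on the fact that a stack is separated (which you derive from full faithfulness of $\f(U)\to{\bf H}^0(\mathfrak{U},\f)$, whereas the paper simply cites this from Moerdijk) so that $\pi^{-1}\f$ is already a sheaf, and part (ii) is the standard argument that triviality of $\pi^{-1}\f$, transported to all automorphism groups via translation, makes descent data unobstructed and unique, which is exactly the content behind the paper's ``the second part is obvious.'' No gaps; your write-up just supplies the details the paper omits.
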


\begin{proof} If $\f$ is a stack then it is  a separated prestack \cite{im}, hence
$\pi^{-1}\f$ is a sheaf and $\Pi^{-1}\f=\pi^{-1}\f$. The second part
 is obvious.
\end{proof}

\begin{Le}\label{zero} i)  For any prestack $\p$ the canonical map $\p\to \p^+$ yields
equivalences of abelian 2-groups
$$\p_x\to (\p^+)_x, \ \ x\in X.$$
In particular one has $\Pi^i(\p)=\Pi^i(\p^+)$.

 ii) For a prestack $\p$ the stack  $\p^+$ is equivalent to zero if and only if $\p_x$ is equivalent to zero for all $x\in X$.

 iii) The pseudofunctor $(-)_x:\st(X)\to\scg$ is exact.

iv) If $\p$ and $\q$ are stacks and $f:\p\to \q$ is a morphism of
stacks, then $f$ is an equivalence if and only if  $\p_x\to \q_{x}$ is
an equivalence for all $x\in X$.

v) A sequence of stacks $0\to \f_1\to \f\to \f_2\to 0$ is an
extension  in $\st(X)$ if and only if for any $x\in X$ the sequence
$0\to \f_{1x}\to \f_x\to \f_{2x}\to 0$ is an extension  of abelian
2-groups.
\end{Le}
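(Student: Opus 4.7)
The plan is to prove the five parts in the order given, each leveraging the previous.

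For (i), I would invoke the standard property that the unit $\p\to\p^+$ of the 2-reflection $\pst(X)\to\st(X)$ is a weak equivalence of prestacks (a built-in feature of stackification, see \cite{ks}, \cite{im}); Lemma~\ref{if_w_e} then gives the claimed equivalence $\p_x\to(\p^+)_x$. The identification $\Pi^i\p=\Pi^i\p^+$ follows because the stalkwise equivalence passes to $\pi^i$, and sheaves of abelian groups are determined up to isomorphism by their stalks via the already-recorded formula $(\Pi^i\p)_x\cong\pi^i(\p_x)$.

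For (ii), the ``only if'' direction is immediate from (i). For ``if'', the same application of (i) reduces the problem to showing that a stack $\f$ with $\f_x\simeq0$ for every $x$ is trivial. Given $a\in\f(U)$, each germ $a_x$ admits an isomorphism to $0$ in $\f_x$, which spreads out to an isomorphism $a|_{V_x}\cong0$ on some neighborhood $V_x\ni x$; because $\f$ is separated, the presheaf $V\mapsto\Hom_{\f(V)}(a|_V,0)$ is a sheaf, so these local isomorphisms glue to a global $a\cong0$ in $\f(U)$. Every morphism in $\f(U)$ may then be identified with an element of $\pi^{-1}\f(U)=\Pi^{-1}\f(U)$ (the equality using that $\f$ is a stack), and this sheaf has vanishing stalks, hence vanishes; thus $\f(U)\simeq0$.

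For (iii), the inclusion $\st(X)\hookrightarrow\pst(X)$ preserves kernels, while the cokernel in $\st(X)$ is the stackification of the cokernel in $\pst(X)$; since $(-)_x:\pst(X)\to\scg$ is exact as a filtered colimit, and stackification is invisible to $(-)_x$ by (i), a stack extension is sent to a 2-group extension at every stalk. Part (iv) then follows formally from (ii) and (iii): applied to the kernel and cokernel of $f:\p\to\q$ in $\st(X)$, the assumption that each $f_x$ is an equivalence forces both to have trivial stalks and hence, by (ii), to be trivial, so $f$ is an equivalence. Part (v) drops out from (iv) and (iii): the ``only if'' is (iii), and conversely the canonical comparison maps $\f_1\to\ker_\st(\f\to\f_2)$ and $\cok_\st(\f_1\to\f)\to\f_2$ are stalkwise equivalences by (iii), hence equivalences of stacks by (iv), exhibiting the sequence as an extension in $\st(X)$.

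The main obstacle is the ``if'' direction of (ii), where the stack and separation axioms are genuinely needed to convert pointwise vanishing into honest triviality---both for gluing the local isomorphisms $a|_{V_x}\cong0$ into a single global isomorphism, and for passing from vanishing stalks of $\pi^{-1}\f$ to vanishing of all its sections. Once (i) and (ii) are in place, the remaining parts reduce to straightforward transport of known facts across the stalk functor.
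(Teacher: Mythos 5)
The decisive step of your argument is the claim in (i) that the unit $\p\to\p^+$ is a weak equivalence of prestacks, to which you then apply Lemma \ref{if_w_e}. With the notion of weak equivalence used in this paper (the functor $\p(U)\to\q(U)$ must be \emph{fully faithful} for every open $U$, plus locally surjective on objects) this claim is false in general: the map $\Hom_{\p(U)}(a,b)\to\Hom_{\p^+(U)}(a,b)$ is the comparison of a presheaf with its associated sheaf on sections, which is neither injective (locally trivial morphisms get killed) nor surjective (new morphisms are created by gluing) unless the hom-presheaves already happen to be sheaves. A discrete prestack given by a presheaf of abelian groups with nonzero but locally zero sections already provides a counterexample. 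So Lemma \ref{if_w_e} cannot be applied to $\p\to\p^+$ directly, and since every other part of your proof is routed through (i), this is a genuine gap rather than a cosmetic one. The repair is the two-step factorization $\p\to\bar\p\to\p^+$ of stackification: for the separation step $\p\to\bar\p$ one computes stalks by hand, using that $\pi^{-1}(\bar\p)=\Pi^{-1}\p$ (so $\pi^{-1}$ of the stalks is unchanged) and that $\pi^0(\p(U))\to\pi^0(\bar\p(U))$ is surjective with kernel the classes of objects locally isomorphic to $0$, which die in the colimit over neighbourhoods of $x$; only the second step $\bar\p\to\p^+$ is a weak equivalence in the paper's sense, and only there is Lemma \ref{if_w_e} invoked. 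In other words, the stalkwise invariance of stackification is exactly the content that has to be proved here; citing it as a ``built-in feature'' short-circuits the lemma.

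Two smaller remarks. In your proof of (ii) the local trivializations $a|_{V_x}\cong 0$ do not glue merely because $V\mapsto\Hom_{\f(V)}(a|_V,0)$ is a sheaf: sections of a sheaf glue only if they agree on overlaps, and the discrepancies live precisely in $\pi^{-1}\f$ of the intersections. You establish the vanishing of $\pi^{-1}\f=\Pi^{-1}\f$ only afterwards, so the argument should be reordered (first kill $\pi^{-1}\f$ from its stalks, then the trivializations are automatically compatible and glue); this is essentially what the preceding lemma's statement $\Pi^0\f\cong\pi^0\f$ when $\pi^{-1}\f=0$ packages, and what the paper uses. Parts (iii)--(v) are fine and follow the paper's route, except that in (v) you should also note why $p$ is cofaithful in $\st(X)$: it factors, up to a canonical track, as the cokernel projection $\f\to\cok(\f_1\to\f)$ followed by the equivalence onto $\f_2$ that you construct, and cokernel projections are cofaithful.
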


\begin{proof} i) Recall that  $\p^+$  is constructed in two steps \cite{im}.
First  one constructs a separated prestack $\bar{\p}$. The objects
of $\bar{\p}(U)$ are the same as of $\p(U)$, while the morphism set
from $a\in\p(U)$ to $b\in \p(U)$ is the group of all sections on $U$
of the sheaf generated by the presheaf $V\mapsto
Hom_{\p(V)}(a|_V,b|_V)$ \cite{im}. It follows that
$$\pi^{-1}(\bar{\p}) =\Pi^{-1}\p$$
and hence $\pi^{-1}(\bar{\p}_x)=\pi^{-1}(\p_x)$, for all $x\in X$.
It is also clear from the description of $\bar{\p}$ that the map
$\pi^{0}(\p(U))\to \pi^0(\bar{\p}(U))$ is an epimorphism of abelian
groups and an object $a \in \p(U)$ lies in the kernel of this
homomorphism if and only if $a$ is locally isomorphic to $0$. It
follows that for any $x\in X$ the induced map $\pi^0(\p_x)\to
\pi^0(\bar{\p}_x)$ is an isomorphism. Hence $\p_x\to \bar{\p}_x$ is
an equivalence of abelian 2-groups. Next $\bar{\p}\to \p^+$ is a
weak equivalence (see Definition 2.7 \cite{im}). Thus $\bar{\p}_x\to
(\p^+)_x$ is an equivalence of abelian 2-groups and as a result $\p_x\to (\p^+)_x$ is too.

ii) It follows from the previous lemma that if $\p^+=0$ then
$\p_x=0$. Conversely, assume $\p_x=0$ for all $x\in X$. Then
$\Pi^i\p=0=\Pi^i(\p^+)$, $i=0,1$. So $\pi^{-1}(\p^+)=0$. Hence
$\Pi^0(\p^+)=\pi^0(\p^+)=0$ and the result follows.

iii) The fact that $(-)_x$ preserves kernels is obvious, because the
inclusion $\pst(X)\hookrightarrow \st(X)$ does preserve kernels and
$(-)_x:\pst(X)\to \scg$ is exact. Assume  $ \f_1\xto{i} \f\xto{p}
\f_2$, together with a track $0\then pi$  is the cokernel of $i$ in
the abelian 2-category $\st(X)$. Then $\f_2=\p^+$ where $p$ is the
cokernel of $i$ in $\pst(X)$. Then for an open set $U$ the abelian
2-group $\p(U)$ is the cokernel of $ \f_1(U)\to \f(U)$. Hence for
any $x\in X$ the abelian 2-group $\p_x$ is the cokernel of $
\f_{1x}\to \f_x$. Apply now the part i) to deduce the result.

iv) If $f:\p\to \q$ is an equivalence then it is of course also a
weak equivalence. Hence by Lemma \ref{if_w_e} $f_x$ is an
equivalence for all $x\in X$. Conversely, assume $f_x$ is an
equivalence for all $x\in X$. Let  $\p_1$ be the kernel of $f$ and
let $\q_1$ be the cokernel of $f$. Then by iii) $\p_{1x}=\q_{1x}=0$.
Hence $\p_1=\q_1=0$ by ii) and we are done.

v)  By iii) the ``if'' part is clear.  Assume $0\to \f_1\xto{i} \f\xto{p}
\f_2\to 0$, together with a track $0\then pi$  is a sequence of
stacks such that for all $x\in X$ the sequence $0\to \f_{1x}\to
\f_x\to \f_{2x}\to 0$ together with induced tracks $0\then p_xi_x$ is
an extension of abelian 2-groups. We claim that $\f_1\to \f$ is
faithful in $\st(X)$. To show this it is equivalent to show  that $\f_1(U)\to
\f(U)$ is faithful in $\scg$. Observe that we have a commutative
diagram
$$\xymatrix{\pi^{-1}(\f_1(U))\ar[r]\ar[d]&\pi^{-1}(\f(U))\ar[d]\\ \prod_{x\in
U}\pi^{-1}(\f_{1x})\ar[r]& \prod_{x\in U}\pi^{-1}(\f_{x})}$$ Since
$\pi^{-1}\f$ and $\pi^{-1}\f_1$ are sheaves  the vertical arrows are
monomorphisms. By assumption the bottom arrow is also a monomorphism
and the claim follows. For a moment let us denote $\f_3$ the
cokernel of $\f_1\to \f$. Then we have a morphism $\f_3\to \f_2$. By
iii) it induces an equivalence $\f_{3x}\to F_{2x}$ for all $x\in X$,
hence it is an equivalence by iv).

\end{proof}

\subsection{Cohomology with coefficients in stacks}
In this section we show that the prestack cohomology defined in the
previous section is in fact determined by associated stacks.

\begin{Pro}\label{stCoh} i)
If $\p$ is a prestack, then $${\bf H}^*(X,\p)\cong {\bf
H}^*(X,\p^+)$$

 ii) If
$$0\to \f_1\to \f\to \f_2\to 0$$
is an extension of stacks then
$$\cdots \to {\bf H}^n(X,\f_1 )\to {\bf H}^n(X,\f ) \to {\bf
H}^n(X,\f_2 ) \to {\bf H}^{n+1}(X,\f_1 )\to \cdots$$ is part of a
2-exact sequence of abelian  2-groups, while
$$\cdots \to H^n_U(X,\f_1) \to H^n_U(X,\f ) \to H^n_U(X,\f_2) \to H^{n+1}_U(X,\f_1
)\to \cdots$$ is an exact sequence of abelian groups.

iii) For any stack $\f$ there is an exact sequence of abelian groups
$$\cdots \to H^{n+1}(X,\Pi^{-1}(\f))\to H^n_U(X,\f)\to
H^n(X,\Pi^0(\f))\to H^{n+2}(X,\Pi^{-1}(\f))\to \cdots
$$

iv) For paracompact $X$ one has similar results for the groups
$\check{{\bf H}}^*(X,\f)$.

\end{Pro}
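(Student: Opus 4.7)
The plan is to deduce (i) from the sheaf-theoretic comparison in Proposition~\ref{glema}(iv), and then obtain (ii)--(iv) as formal consequences. Throughout I will use Lemma~\ref{TUzero} to reduce 2-categorical equivalences of secondary cohomology to isomorphisms of Takeuchi--Ulbrich groups, where the ordinary five lemma applies.

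For (i), the prestack morphism $\p\to\p^+$ induces a morphism between the TU-exact sequences of Proposition~\ref{3}(ii) for $\p$ and for $\p^+$, comparing $H^{*}(X,\pi^i\p)$ with $H^{*}(X,\pi^i\p^+)$ for $i=0,-1$. By Lemma~\ref{zero}(i) the associated sheaves coincide, both being $\Pi^i\p$; hence by Proposition~\ref{glema}(iv) the outer comparison maps in the ladder are isomorphisms. The five lemma then gives $H^*_U(X,\p)\cong H^*_U(X,\p^+)$, and Lemma~\ref{TUzero} promotes this to an equivalence ${\bf H}^*(X,\p)\cong {\bf H}^*(X,\p^+)$.

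For (ii), given an extension of stacks $0\to\f_1\to\f\to\f_2\to 0$, form the sectionwise cokernel $\q:=\cok(\f_1\to\f)$ computed in $\pst(X)$. Since $\f_1\to\f$ is faithful in $\st(X)$ and hence in $\pst(X)$ (kernels being preserved by the inclusion), and since faithful morphisms in the abelian 2-category $\scg$ realize their cokernel as an extension, $0\to\f_1\to\f\to\q\to 0$ is itself an extension in $\pst(X)$. Because stackification is a left adjoint and therefore preserves cokernels, $\q^+\simeq\f_2$. Applying Proposition~\ref{3}(i) to this prestack extension and then substituting $\f_2$ for $\q$ via part~(i) yields both the 2-exact sequence for ${\bf H}^*(X,-)$ and the long exact sequence for $H^*_U(X,-)$.

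Part~(iii) is immediate from the TU-exact sequence of Proposition~\ref{3}(ii) applied to $\f$: since $(\pi^i\f)^+=\Pi^i\f$, Proposition~\ref{glema}(iv) gives $H^{*}(X,\pi^i\f)\cong H^{*}(X,\Pi^i\f)$, so the presheaves may be replaced by the sheaves $\Pi^i\f$ throughout. Part~(iv) follows by repeating the three arguments verbatim with $\check{{\bf H}}^*$ in place of ${\bf H}^*$, since for paracompact $X$ the comparison of Proposition~\ref{3}(iii) is already an equivalence. The most delicate step is verifying in part~(ii) that the sequence $0\to\f_1\to\f\to\q\to 0$ really is a prestack extension; this rests on normality of faithful morphisms in $\scg$, which is one of the axioms making $\scg$ an abelian 2-category and ensures that a faithful $i$ is equivalent to the kernel of its cokernel.
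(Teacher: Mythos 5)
Your proof is correct and follows essentially the same route as the paper: for (i) the ladder of TU-exact sequences, the five lemma, Proposition \ref{glema}(iv) together with the identification $\Pi^i(\p)=\Pi^i(\p^+)$, and Lemma \ref{TUzero}; for (ii) the reduction to the prestack cokernel with $\f_2\simeq\q^+$ and part (i); and the analogous comparisons for (iii) and (iv). The only (harmless) difference is that you spell out, via normality of faithful morphisms, why the sectionwise cokernel sequence is a prestack extension, a step the paper leaves implicit.
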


\begin{proof} i) By Lemma \ref{TUzero} it suffices to show that
$H^n_U(X,\p)\to H^n_U(X,\p^+)$ is an isomorphism for all $n$. To see
this we use part 2 of Proposition \ref{3}. Thus we have the
commutative diagram of abelian groups with exact rows
$$\xymatrix@C=1em{
\cdots \ar[r]& H^{n+1}(X,\pi^{-1}(\p))\ar[r]\ar[d]&
H^n_U(X,\p)\ar[r]\ar[d]& H^n(X,\pi^0(\p))\ar[r]\ar[d]&
H^{n+2}(X,\pi^{-1}(\p))\ar[r]\ar[d] &\cdots\\ \cdots \ar[r]&
H^{n+1}(X,\pi^{-1}(\p^+))\ar[r]& H^n_U(X,\p^+)\ar[r]&
H^n(X,\pi^0(\p^+))\ar[r]& H^{n+2}(X,\pi^{-1}(\p^+))\ar[r] &\cdots}
$$
By the 5-lemma it suffices to prove that $H^*(X,\pi^i(\p))\to
H^*(X,\pi^i(\p^+))$ is an isomorphism. But this follows from
part iv) of Proposition \ref{glema} together with isomorphisms
$(\pi^i(\p))_x\cong \pi^i(\p_x)\cong \pi^i(\p^+_x)\cong
(\pi^i(\p^+))_x$.

ii) By definition $\f_2=\p^+$, where $\p$ is the cokernel of
$\f_1\to \f$ in $\pst(X)$. By Proposition \ref{3} one has 2-exact
sequences involving the 2-groups ${\bf H}^n(X,\f_1 )$, $ {\bf
H}^n(X,\f )$ and $ {\bf H}^n(X,\p )$. Hence the result follows from
part i). Similar arguments work for iii) and iv).
\end{proof}

\subsection{Secondary Ext}
We start with the following familiar construction.

\begin{Le}\label{ist} i) Fix a point $x\in X$ and an abelian 2-group
$\A$. Define $i_x(\A)$ to be the stack given by
$$i_x(\A)(U)=\begin{cases}\A &  {\rm if} \ \  x\in U\\ 0, &  {\rm if} \ \ x \not \in
U\end{cases}$$ Then for any stack $\f$ one has
$${\bf Hom}_{\st(X)}(\f, i_x(\A))\simeq {\bf Hom}_{\scg}(\f_x,\A)$$

ii) If $(\A_x)_{x\in X}$ is a collection of injective abelian
2-groups, then $\prod_{x\in X}i_x(\A_x)$ is an injective object in
$\st(X)$.

iii) If $\f$ is a stack then there exists an extension
$$0\to \f\to \f_1\to \f_2\to 0$$
with injective $\f_1$.

iv) Any injective stack is a direct summand of an elementary stack.
\end{Le}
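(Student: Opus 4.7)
The plan is to identify $i_x$ as the right pseudoadjoint to the stalk functor $(-)_x$ and to deduce ii)--iv) from this adjunction combined with standard facts about injectives.

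For i), I would first verify that $i_x(\A)$ is genuinely a stack: for a cover of $U$ with $x\notin U$ the descent diagram is identically zero, while if $x\in U$ it reduces to the sub-cover consisting of those $U_{i_0\cdots i_n}$ containing $x$, whose nerve has $x$ as a common vertex of each simplex and therefore contracts onto a constant object with value $\A$. The adjunction itself follows by unpacking: a morphism $\phi\colon\f\to i_x(\A)$ is zero on opens not containing $x$, and on opens containing $x$ gives a system of maps $\f(U)\to\A$ compatible up to coherent tracks under restriction, i.e.\ an object of the colimit defining $\scg(\f_x,\A)$; the inverse construction composes $\f(U)\to\f_x$ with a given $\f_x\to\A$, and the same manipulation applied to tracks upgrades the bijection to an equivalence of hom-2-groups.

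For ii), products of injectives in an abelian 2-category are injective (apply the defining lifting property to each projection and invoke the 2-categorical universal property of the product), so it suffices to show $i_x(\A)$ is injective when $\A$ is. Given a faithful $\f_1\to\f_2$ in $\st(X)$ and $g\colon\f_1\to i_x(\A)$, adjunction converts $g$ to $(\f_1)_x\to\A$. By Lemma~\ref{zero}~iii) the stalk pseudofunctor is exact, and since filtered colimits preserve monomorphisms of abelian groups, $\pi^{-1}((\f_1)_x)\to\pi^{-1}((\f_2)_x)$ remains injective, so $(\f_1)_x\to(\f_2)_x$ is faithful in $\scg$; injectivity of $\A$ then supplies a lift $(\f_2)_x\to\A$, which adjunction converts back into the required extension of $g$.

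For iii), use that $\scg$ has enough injectives \cite{2der} to choose, for each $x\in X$, a faithful morphism $\f_x\to\A_x$ into an injective abelian 2-group. Via adjunction these yield morphisms $\f\to i_x(\A_x)$, whence a single $\f\to\prod_{x\in X}i_x(\A_x)$ whose target is injective by ii) and elementary because $(\prod_y i_y(\A_y))(U)=\prod_{y\in U}\A_y$. The crucial point is faithfulness of this morphism, equivalently injectivity on each open $U$ of $\pi^{-1}(\f(U))\to\prod_{x\in U}\pi^{-1}(\A_x)$. I would factor this as
$$\pi^{-1}(\f(U))\lra\prod_{x\in U}\pi^{-1}(\f_x)\lra\prod_{x\in U}\pi^{-1}(\A_x);$$
the second arrow is injective because each $\pi^{-1}(\f_x)\to\pi^{-1}(\A_x)$ is so by construction, and the first is injective because $\pi^{-1}\f=\Pi^{-1}\f$ is a sheaf (as $\f$ is a stack) and any sheaf injects into the product of its stalks. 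Part iv) is then immediate: an injective stack $I$ embedded faithfully into the elementary stack $E$ produced in iii) splits off $E$ as a direct summand by its own injectivity. The main obstacle is the faithfulness check in iii), which hinges on the sheaf property of $\pi^{-1}\f$ for a stack $\f$; everything else is either formal manipulation of the adjunction in i) or a direct application of the exactness of $(-)_x$ from Lemma~\ref{zero}~iii).
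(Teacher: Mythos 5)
Your proposal is correct and follows essentially the same route as the paper's (much terser) proof: the adjunction between $(-)_x$ and $i_x$ for i), exactness of stalks (Lemma \ref{zero}) plus closure of injectives under products for ii), stalkwise faithful embeddings into injective abelian 2-groups assembled via i) for iii), and the retract argument for iv); your explicit faithfulness check in iii) via the sheaf $\pi^{-1}\f$ is exactly the argument the paper itself uses in the proof of Lemma \ref{zero} v). The one step you state as immediate but which is not entirely formal in the 2-categorical setting is that an injective stack admitting a retraction up to track off an elementary stack is actually a direct summand; the paper delegates precisely this to \cite{ext-b-v}, Corollary 11.2.
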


\begin{proof} i) is the matter of a straightforward checking, ii) is
a direct consequence of i) and Lemma \ref{zero}. To show iii) we
choose extensions of abelian 2-groups $$0\to \f_x\to\A_x\to\B_x\to
0$$ with $\A_x$ an injective abelian 2-group. Then by i) we have a
morphism $\f\to \prod_{x\in X}i_x(\A_x)$ and we can take
$\f_1=\prod_{x\in X}i_x(\A_x)$. Finally iv) follows from the above and
\cite{ext-b-v}, Corollary 11.2.
\end{proof}

Let $\Phi$ be the Picard category constructed in \cite{2der} which we described in the introduction. For an
abelian  2-group $\A$, the stack $(\A_c)^+$ is called the constant
stack corresponding to $\A$. By abuse of notation we write $\A$
instead of $(\A_c)^+$.

Now we are in the position to prove our main theorem which relates
the secondary $Ext$ from  \ref{Sec_ext} to our cohomology theory.
Since $\st(X)$ has enough injective objects we can apply the
construction from  \ref{Sec_ext}  to get for any stacks $\f_1,\f_2$
the abelian 2-group ${\bf Ext}^*_{\st(X)}(\f_1,\f_2)$. As usual with
Takeuchi-Ulbrich objects we have
$$ Ext^*_{\st(X)}(\f_1,\f_2):=\pi^0({\bf
Ext}^*_{\st(X)}(\f_1,\f_2)).$$

\begin{The}\label{ext} For any stack $\f$ one has a natural equivalence
$${\bf Ext}^*_{\st (X)}(\Phi,\f)\cong {\bf H}^*(X,\f).$$
\end{The}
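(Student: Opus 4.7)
The plan is to apply the axiomatic characterization of secondary right derived 2-functors (Proposition \ref{ax}) to the sequence ${\bf T}^n := {\bf H}^n(X,-):\st(X)\to\scg$, and then identify ${\bf T}^0$ with $\mhom_{\st(X)}(\Phi,-)$. Once the hypotheses of Proposition \ref{ax} are checked, one obtains natural equivalences ${\bf R}^n{\bf T}^0 \simeq {\bf T}^n$ for every $n\in\Z$, and since ${\bf Ext}^n_{\st(X)}(\Phi,-) = {\bf R}^n\mhom_{\st(X)}(\Phi,-)$ by the definition of the secondary $Ext$ in Section \ref{Sec_ext}, the theorem follows.

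The three hypotheses of Proposition \ref{ax} are short to verify. The vanishing ${\bf T}^n=0$ for $n<-1$ holds because the 2-cochain complex $C^*(\al,\f)$ computing ${\bf H}^n(\al,\f)$ lives in nonnegative degrees, so $H^n_U=0$ for $n<0$ and hence ${\bf H}^n=0$ for $n<-1$, properties preserved by the filtered colimit over Berishvili covers. The 2-exact sequence attached to an extension of stacks is exactly Proposition \ref{stCoh}(ii). For vanishing on injectives, Lemma \ref{ist}(iv) reduces the problem to elementary stacks, and Lemma \ref{el} then supplies ${\bf H}^n(X,\p)=0$ for $n>1$ together with $\pi^0{\bf H}^1(X,\p)=H^1_U(X,\p)=0$ for elementary $\p$; both vanishings pass to direct summands.

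The heart of the argument is the identification ${\bf H}^0(X,\f)\simeq\mhom_{\st(X)}(\Phi,\f)\simeq\f(X)$. On one side, the stack axiom furnishes $\f(U)\simeq{\bf H}^0(\u,\f)$ for every open cover $\u$ of $U$, and an analogous descent statement for each Berishvili cover $\al$ (the sets $\{\al(x)\}_{x\in X}$ form a genuine open cover of $X$, with the higher $\al(x_0,\ldots,x_n)$ providing a refinement of the usual intersections, which still suffices for descent) gives ${\bf H}^0(\al,\f)\simeq\f(X)$; passing to the colimit over Berishvili covers yields ${\bf H}^0(X,\f)\simeq\f(X)$. On the other side, the constant-stack pseudofunctor $\pi^*:\scg\to\st(X)$ is left 2-adjoint to global sections $\Gamma(X,-)=(-)(X):\st(X)\to\scg$, so
$$\mhom_{\st(X)}(\Phi,\f)\simeq\mhom_\scg(\Phi,\f(X))\simeq\f(X),$$
where the final equivalence sends a symmetric monoidal functor $F$ to its value $F(1)\in\f(X)$ and reflects the universal property of $\Phi$ as the free abelian 2-group on one object, a direct consequence of its role as indecomposable projective generator of $\scg$.

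The main obstacle, in my view, lies in this degree-zero identification, specifically in spelling out Berishvili-cover descent for stacks (which goes a step beyond the stack axiom as stated) and in pinning down $\mhom_\scg(\Phi,\A)\simeq\A$ as abelian 2-groups rather than merely as sets of equivalence classes; the other items follow cleanly from the structural results already in place. Once ${\bf T}^0\simeq\mhom_{\st(X)}(\Phi,-)$ is established, Proposition \ref{ax} delivers the desired natural equivalences ${\bf H}^n(X,\f)\simeq{\bf Ext}^n_{\st(X)}(\Phi,\f)$ for all $n\in\Z$.
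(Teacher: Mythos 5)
Your proposal is correct and follows essentially the same route as the paper: identify ${\bf H}^0(X,-)$ with ${\bf Hom}_{\st(X)}(\Phi,-)$ via the constant-stack/global-sections adjunction and the freeness of $\Phi$, then invoke the axiomatic characterization of secondary derived 2-functors (Proposition \ref{ax}), checking vanishing on injectives through Lemma \ref{ist}, Proposition \ref{stCoh} and Lemma \ref{el}. The points you flag as delicate (Berishvili-cover descent giving ${\bf H}^0(X,\f)\simeq\f(X)$) are likewise asserted without elaboration in the paper, so your treatment matches its level of detail.
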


\begin{proof} Observe that
$${\bf Hom}_{\st}(\Phi,\f)\cong {\bf Hom}_{\pst}((\Phi)_c,\f)\cong {\bf Hom}_{\scg}(\Phi,\f(X))\cong \f(X)\cong {{\bf H}}^0(X,\f).$$
By the axiomatic characterization of the secondary derived functors
(see Proposition  \ref{ax}) it suffices to show that if $\f$ is an
injective object then ${{\bf H}}^n(X,\f)=0$ for $n>1$ and ${\bf
H}^1(X,\f)$ is connected. By Lemma \ref{ist} this would follow if we
prove a similar statement for elementary stacks. By ii) of
Proposition \ref{stCoh} this reduces to the case of elementary
prestacks which was handled in Lemma \ref{el}.
\end{proof}

Now we have all the ingredients to prove the following result.

\begin{The} Let $\A$ be an abelian 2-group. Then for any polyhedron
$X$ one has an isomorphism
$$H^*(X,{\sf sp}(\A))\cong Ext^*_{\st(X)}(\Phi,\A).$$
\end{The}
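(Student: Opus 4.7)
The plan is to simply assemble the pieces already proved in the paper: since polyhedra are paracompact and carry a triangulation, every comparison isomorphism we need is available, and the result is obtained by chaining them together after passing to $\pi^0$.

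Concretely, I would proceed as follows. First, apply Theorem \ref{ext} to the constant stack $\A=(\A_c)^+$: this gives an equivalence of abelian 2-groups ${\bf Ext}^*_{\st(X)}(\Phi,\A)\simeq{\bf H}^*(X,\A)$. Taking $\pi^0$ on both sides yields
\[
Ext^*_{\st(X)}(\Phi,\A)\cong H^*_U(X,\A).
\]
Next, by part i) of Proposition \ref{stCoh} the cohomology depends only on the associated stack, so ${\bf H}^*(X,\A_c)\cong {\bf H}^*(X,(\A_c)^+)={\bf H}^*(X,\A)$, and therefore
\[
H^*_U(X,\A)\cong H^*_U(X,\A_c).
\]

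Since a polyhedron is paracompact, part iii) of Proposition \ref{3} provides an equivalence of abelian 2-groups $\check{{\bf H}}^*(X,\A_c)\simeq {\bf H}^*(X,\A_c)$; passing to $\pi^0$ gives
\[
\check H^*_U(X,\A_c)\cong H^*_U(X,\A_c).
\]
Finally, for polyhedra Proposition \ref{4} identifies $\check H^*_U(X,\A_c)$ with $H^*(X,{\sf sp}(\A))$. Composing the four isomorphisms
\[
Ext^*_{\st(X)}(\Phi,\A)\cong H^*_U(X,\A)\cong H^*_U(X,\A_c)\cong \check H^*_U(X,\A_c)\cong H^*(X,{\sf sp}(\A))
\]
produces the desired natural isomorphism.

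There is no genuine obstacle here; all the hard work was done earlier. The only point worth being careful about is checking that the chain is natural in $\A$ and that the hypothesis ``polyhedron'' is used exactly where needed: in Proposition \ref{4} one needs a triangulation to compare the \v{C}ech and singular computations, while for the \v{C}ech/Berishvili comparison of Proposition \ref{3}iii) one needs only paracompactness, which follows. If one wanted to weaken the hypothesis, the natural candidate would be any paracompact $X$ admitting an open cover whose nerve computes the homotopy type, but for the statement as given the polyhedral assumption enters precisely through Proposition \ref{4}.
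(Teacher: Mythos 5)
Your proposal is correct and is essentially the paper's own proof: both chain Theorem \ref{ext}, Proposition \ref{stCoh} (invariance under stackification, applied to $\A_c$), the \v{C}ech/Berishvili comparison of Proposition \ref{3} iii) via paracompactness of a polyhedron, and Proposition \ref{4}. The only difference is cosmetic: you cite part i) of Proposition \ref{stCoh} for $H^*_U(X,\A)\cong H^*_U(X,\A_c)$, which is in fact the correct reference, whereas the paper's text cites part ii) there, apparently a slip.
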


\begin{proof} By Theorem \ref{ext} and ii) Proposition \ref{stCoh} we
have $$Ext^*_{\st(X)}(\Phi,\A) \cong {H}^*_U(X,\A)\cong
H^*_U(X,\A_c).$$ On the other hand $H^*(X,{\sf sp}(\A))\cong
\check{H}^*_U(X,\A_c)$ thanks to Proposition \ref{4}. Hence the
result follows from iii) Proposition \ref{3}.
\end{proof}

This result is  a companion of the following result which is a
trivial consequence of Theorem 16  \cite{2der}. Assume $X$ is a
topological space such that $\pi_i(X)=0$ for $i\not = 1,2$. It is
well known that the fundamental groupoid of the loop space of $X$
has a 2-group structure. Denote this 2-group by $\G$ and again let
$\A$ be the abelian 2-group corresponding to a 2-stage spectrum $E$.
Then $$H^*(X,E)\cong Ext^*(\Phi, \A)$$ where $Ext$ is now taken in
the 2-category of 2-representations of $\G$ and the actions of $\G$
on $\Phi$ and $\A$ are trivial. Compare this  result with the
familiar fact that the cohomology of a $K(\Pi,1)$-space can be
computed as $Ext$ in the category of representations of the group
$\Pi$.
\subsection{Line bundles, discriminant and twisted Sheaves} One of the most important stacks of
abelian 2-groups is given by line bundles on a manifold $X$. In this
section we consider the cohomology with coefficients in this
particular stack. As we will see soon this cohomology up to shift in
the dimension is the same as the cohomology with coefficients in the
sheaf of invertible elements, so we do not get anything new. But
existence of this isomorphism depends on the fact that any
invertible module over a local ring is trivial. Hence we get more
interesting situation when we consider more general objects than
manifolds. Let $(X,{\mathcal O}_X)$ be a ringed space. Thus
${\mathcal O_X}$ is a sheaf of commutative rings on $X$. Then we
have a stack ${\mathcal L}$ of invertible ${\mathcal O}_X$-modules.
This stack assigns to an open set $U$ of $X$ the groupoid of
invertible ${\mathcal O}_X(U)$-modules and their isomorphisms. The
tensor product equips this stack with a structure of a stack of
abelian 2-groups. So we have well-defined groups $H_U^*(X,{\mathcal
L}_X)$. It is clear that we have a canonical isomorphism of sheaves
$$\Pi^{-1}({\mathcal L}_X)\cong {\mathcal O^*}_X$$
and for any $x\in X$ one has an isomorphism of abelian groups
$$(\Pi^0({\mathcal L}_X))_x\cong {\sf Pic}({\mathcal O}_x).$$
The TU-exact sequence gives us the following exact sequences of
abelian groups
$$\cdots \to H^{n+1}(X,{\mathcal O^*}_X)\to H^n_U(X, {\mathcal L}_X)\to
H^n(X,\Pi^0({\mathcal L}_X))\to H^{n+2}(X,{\mathcal O^*}_X)\to
\cdots$$ In particular we have
$$ H^n_U(X, {\mathcal L}_X)\cong H^{n+1}(X,{\mathcal O^*}_X)$$
provided ${\mathcal O}_x$ is a local ring for all $x$. This is so
for example, when $X$ is a scheme or a complex manifold. However for
general $(X,{\mathcal O}_X)$ these groups are different. It is well
known that the groups $H^*(X,{\mathcal O^*}_X)$ appear in many
problems of geometry and hopefully the same is true for $H^n_U(X,
{\mathcal L}_X)$. In this direction let us mention the following
fact which is a restatement of a result of Section 19.6 \cite{ks}.
\begin{Pro} The set of equivalence classes of stacks of twisted
${\mathcal O}_X$-modules is isomorphic to $H^1_U(X, {\mathcal
L}_X)$.
\end{Pro}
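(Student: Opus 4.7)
The plan is to match the descent-theoretic classification of twisted ${\mathcal O}_X$-module stacks from \cite{ks} with the 2-categorical Takeuchi--Ulbrich group $H^1_U(X,{\mathcal L}_X)$. Given a stack $\mathcal{T}$ of twisted ${\mathcal O}_X$-modules, first choose a Berishvili cover $\al$ of $X$ over which $\mathcal{T}$ becomes locally equivalent to ${\mathcal L}_X$. Comparing two such local equivalences on the double pieces $\al(x_0,x_1)$ produces invertible ${\mathcal O}_X$-modules $L_{x_0 x_1}\in{\mathcal L}_X(\al(x_0,x_1))$, and the corresponding comparisons on triples yield coherent isomorphisms $L_{x_0 x_1}\otimes L_{x_1 x_2}\xto{\sim} L_{x_0 x_2}$, subject to the familiar cocycle condition on quadruple intersections.

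Reading this data through the construction of the 2-cochain complex $C^*(\al,{\mathcal L}_X)$ associated to the precosimplicial object $[n]\mapsto\prod{\mathcal L}_X(\al(x_0,\dots,x_n))$, the tuple $(L_{x_0 x_1})$ is a 1-cochain, the chosen tensor isomorphisms are precisely a track witnessing that its coboundary is trivial, and the cocycle condition expresses compatibility of this trivialization with the next coboundary. Thus the data represents a class in $\pi^0{\bf H}^1(\al,{\mathcal L}_X)=H^1_U(\al,{\mathcal L}_X)$. One then verifies that two stacks of twisted ${\mathcal O}_X$-modules are equivalent in the sense of \cite{ks} if and only if, after passing to a common refinement of their trivializing Berishvili covers, the associated $1$-cocycles differ by a 2-categorical coboundary. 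Passing to the colimit over all Berishvili covers of $X$ then yields the desired bijection with $H^1_U(X,{\mathcal L}_X)$.

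The main obstacle will be the careful bookkeeping of tracks in the 2-cochain complex: unlike the classical case, the equivalence relation on $1$-cocycles involves not only ordinary coboundaries but also the pseudomodifications encoded in the $\partial$-tracks of $C^*(\al,{\mathcal L}_X)$, and matching this precisely with the notion of equivalence of twisted stacks from \cite{ks} is where the substantive work lies. Once this dictionary is in place, the statement follows formally from the definitions in Section~4.
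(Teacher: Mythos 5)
Your forward dictionary (local trivializations of a twisted stack $\mathcal{T}$ $\Rightarrow$ invertible modules on double pieces, comparison isomorphisms on triples, coherence on quadruples $\Rightarrow$ a class in $\pi^0{\bf H}^1=H^1_U$) is indeed the substance of what the paper relies on: its ``proof'' is a citation of Section 19.6 of \cite{ks} (Remark 19.6.4(iii)), with the observation that before the hypothesis ${\sf Pic}({\mathcal O}_x)=0$ is invoked, that argument produces exactly a degree-one class with values in ${\mathcal L}_X$ rather than an ${\mathcal O}_X^*$-valued $2$-cocycle. But two things in your plan need repair. First, a stack of twisted ${\mathcal O}_X$-modules is locally equivalent to the stack of ${\mathcal O}_U$-modules, not to ${\mathcal L}_X$; the Picard stack ${\mathcal L}_X$ enters only as the gr-stack of autoequivalences of that local model, via the Morita-type fact (this is the non-formal input, proved in \cite{ks}) that ${\mathcal O}_U$-linear autoequivalences of the stack of ${\mathcal O}_U$-modules are given by tensoring with an invertible module. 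As written, ``$\mathcal{T}$ becomes locally equivalent to ${\mathcal L}_X$'' is incorrect, and the Morita identification is used silently in ``produces invertible modules $L_{x_0x_1}$''.

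Second, and more seriously, your plan only constructs and compares the map in one direction. A bijection with $H^1_U(X,{\mathcal L}_X)$ also requires that every class be realized by a twisted stack, i.e.\ that a stack can be glued from a degree-one cocycle, and you never address this. Over an ordinary open cover this is $2$-descent for stacks, but your cocycles live over arbitrary Berishvili covers, which are not \v{C}ech nerves of open covers, and nothing in the paper provides descent of stacks along a Berishvili cover; so this step does not follow ``formally from the definitions in Section 4''. One must either reduce an arbitrary Berishvili class to a \v{C}ech one --- which is precisely the comparison $\check{{\bf H}}^*\to{\bf H}^*$ of Proposition \ref{3} iii), established there only for paracompact $X$ --- or give a direct realizability argument; that is a genuine missing idea rather than bookkeeping of tracks. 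Relatedly, even in the injectivity direction the passage from ``the two stacks are equivalent'' to ``the cocycles are cohomologous'' requires refining the cover when choosing the isomorphisms between trivializations, so the refinement step should be made explicit rather than absorbed into ``passing to the colimit''.
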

In fact this was proved implicitly in \cite{ks} (see Remark 19.6.4
(iii) in \cite{ks}) modulo the fact that instead of group $H^1_U(X,
{\mathcal L}_X)$ they have $H^2(X,{\mathcal O}^*_X)$. However they
are assuming that ${\sf Pic}({\mathcal O}_x)=0$ for all $x$. But
this last restriction they have only at the very end and the
argument before that proves precisely the statement we just state.

Recall also that a \emph{discriminant} \cite{knus} over
$(X,{\mathcal O}_X)$ is a pair $(L,h)$, where $L$ is an invertible
${\mathcal O}_X$-module and $h$ is a nondegenerate symmetric
bilinear  form $h:L\otimes L\to {\mathcal O}_X$. Here the tensor
product is taken over ${\mathcal O}_X$.  A morphism from $(L,h)$ to
$(L',h')$ is an isomorphism of ${\mathcal O}_X$-modules which is
compatible with forms. The isomorphism classes of discriminants is
denoted by ${\sf Dis}({\mathcal O}_X)$. In this way one obtains the
stack of discriminants ${\mathcal D}_X$. Since $h:L\otimes L\to
{\mathcal O}_X$ is an isomorphism, we see that the stack ${\mathcal
D}_X$ is nothing but the kernel of ${\mathcal L}_X\xto{2} {\mathcal
L}_X$. Here $2$ is used in additive notations, in multiplicative
notation it is given by $L\mapsto L\otimes L$. Assume for any $x\in
X$ the group ${\sf Pic}({\mathcal O}_x)$ is 2-divisible, meaning
that any element is of the form $x^2$.  For instance this obviously
holds if ${\mathcal O}_x$ is a local ring. Then we get an extension
of stacks:
$$0\to {\mathcal D}_X\to{\mathcal L}_X\xto{2} {\mathcal
L}_X\to 0$$ which yields not only  a well-known short exact sequence
\cite{knus}
$$0\to {\mathcal O}^*/{\mathcal O}^{*2}\to {\sf Dis}(X)\to \, _2{\sf Pic}(X)\to 0$$
but also gives a  function which assigns to  each invertible
${\mathcal O}_X$-module $L$ (resp. stack $T$ of twisted ${\mathcal
O}_X$-modules) a class $d(L)\in H^1_U(X,{\mathcal D}_X)$ (resp.
$d(T)\in H^2_U(X,{\mathcal D}_X$)) which vanishes if and only if
there exists a ''square root'' of $L$ (resp. $T$). Of course $2$ can
be replaced by any integer $n$. The role of the stack ${\mathcal
 D}$ will be play by the kernel of ${\mathcal L}_X\xto{n} {\mathcal L}_X$.


\begin{thebibliography}{99}
\bibitem{adams} {\sc J. F. Adams. } Stable homotopy and generalized homology.
Chicago Lecture Notes in Mathematics. University of Chicago Press.
1974.

\bibitem{baues} {\sc H.-J. Baues}. Homotopy and $4$-dimensional complexes.
de Gruyter Expositions in Mathematics, 2. Walter de Gruyter and Co.,
Berlin, 1991. xxviii+380 pp. ISBN: 3-11-012488-2

\bibitem{mamuka_adams} {\sc H.-J. Baues} and {\sc M. Jibladze}. Secondary derived
functors and the Adams spectral sequence. Topology 45 (2006),
295--324.



 \bibitem{beil2} {\sc A. Beilinson} and {\sc V. Drinfeld}. Quantization
 of Hitchin's integrable system and Hecke eigensheaves. A book in
 progress.


\bibitem{beri} {\sc G. D. Berishvili}. On objects of Vietoris cohomology with
coefficients in sheaves. (Russian) Bull. Ac. Sc. of Georgian  SSR.
41 (1966), 19--25.

\bibitem{ext-b-v} {\sc D. Bourn} and {\sc E. M. Vitale}.
Extensions of symmetric cat-groups. Homology Homotopy Appl. 4
(2002), 103--162.


\bibitem{br}  {\sc J.-L. Brylinski}. Loop spaces, characteristic classes and geometric quantization.
Progress in Mathematics, 107. Birkh\"auser Boston, Inc., Boston, MA,
1993. xvi+300 pp.

\bibitem{bcc} {\sc M. Bullejos}, {\sc P. Carrasco} and {\sc A. M. Cegarra}.
Cohomology with coefficients in symmetric categorical groups. An
extension of Eilenberg-Mac Lane's classification theorem. Proc.
Math. Camb. Phil. Soc. 114 (1993), 163--189.

\bibitem{ch_th} {\sc R. O. Burdick}, {\sc  P. E. Conner} and {\sc E. E.
Floyd}. Chain theories and their derived homology. Proc. Amer. Math.
Soc. 19 (1968), 1115–-1118.


\bibitem{CE} {\sc H. Cartan} and {\sc S. Eilenberg}. Homological
algebra. Princeton University Press. 1956.

\bibitem{conduche} {\sc D. Conduch\'e}. Modules crois\'es g\'en\'eralis\'es de longueur $2$.
 J. Pure Appl. Algebra 34 (1984), no. 2-3, 155–-178.

\bibitem{2-ch} {\sc A. del Rio}, {\sc J. Martinez-Moreno} and {\sc E. M. Vitale}. Chain
complexes of symmetric categorical groups. J. Pure and Appl. Algebra,
196 (2005), 279--312

\bibitem{SGA} {\sc P. Deligne}. La Formule de Dualit\'e Globale, in SGA4, vol. 3, Lecture Notes in Mathematics
305, Springer, Berlin (1973), pp. 481–-587.

\bibitem{dupont} {\sc M. Dupont}. Abelian categories in dimension 2.
arXiv:0809.1760.

\bibitem{es} {\sc S. Eilenberg} and {\sc N. Steenrod}. Foundation of
algebraic topology. Princeton. Princeton University Press. 1952.

\bibitem{gz} {\sc P. Gabriel} and {\sc M. Zisman}. Calculus of
fractions and homotopy theory. Springer, 1967.

\bibitem{tohoku} {\sc A. Grothendieck}. Sur quelques points d'alg\`ebre homologique,
Tohoku. Math. J., 9 (1957), 119--221.

\bibitem{HS} {\sc M. J. Hopkins} and {\sc I. M. Singer}. Quadratic functions in geometry,
topology, and M-theory. J. Differential Geom. 70 (2005), no. 3,
329--452.


\bibitem{ks} {\sc M. Kashiwara} and {\sc P. Schapira} Categories and
sheaves.  Grundlehren der Mathematischen Wissenschaften, 332.
Springer-Verlag, Berlin, 2006. x+497 pp.

\bibitem{knus} {\sc M.-A. Knus}. Quadratic and Hermitian forms over rings. Grundlehren
der Mathematischen Wissenschaften  294. Springer-Verlag, Berlin, 1991.
xii+524 pp.

\bibitem{im} {\sc I. Moerdijk}.  Introduction to the language of stacks and gerbes.
arXiv: math.AT/0212266.


 \bibitem{nakaoka} {\sc  H. Nakaoka}.  Cohomology theory in 2-categories.
 Theory Appl. Categ. 20 (2008), No. 16, 543–-604.

\bibitem{zhu1} {\sc D. Osipov} and {\sc X. Zhu}. A categorical proof of the Parshin
reciprocity laws on algebraic surfaces. arXiv:1002.4848.


\bibitem{2der} {\sc T. Pirashvili}. On abelian 2-categories and derived
2-functors.  arXiv:1007.4138.


\bibitem{ab-2-ab} {\sc T. Pirashvili}. Abelian categories versus abelian 2-categories.
Georgian Math. J. 16 (2009), 353--368.


\bibitem{sinh} {\sc H. X. Sinh}. $Gr$-cat\'egories, Th$\grave{e}$se de
Doctoral d'Etat. Universit\'e Paris VII, 1975.


\bibitem{tak_ulb} {\sc M. Takeuchi} and {\sc K.-H. Ulbrich}. Complexes of categories
with abelian group structures. J. Pure and Appl. Algebra. 27 (1983),
501--513.

\bibitem{ulb} {\sc K.-H. Ulbrich}. Group cohomology for Picard categories. J. algebra, 91 (1984), 464-498.


\bibitem{vitale} {\sc E. M. Vitale}. A Picard-Brauer exact sequence of categorical groups.
J. Pure and Appl. Algebra. 175 (2002), 283--408.
\end{thebibliography}
\end{document}